\documentclass{amsart}

\usepackage{hyperref}
\usepackage{tikz-cd}
\usepackage{amsfonts,amsmath,amssymb,mathrsfs, stmaryrd, amsthm}
\usepackage{enumitem}
\usepackage{cite}
\usepackage{amsrefs}
\usepackage{bm}

\title[Covariant representations of algebraic actions]{Covariant representations \\ of algebraic group actions \\ and applications}
\author{Yvann Gaudillot-Estrada}
\address{Université de Lorraine, CNRS, IECL, F-57000 Metz, France}
\email{yvann.gaudillot-estrada@univ-lorraine.fr}
\subjclass{14L30, 20G05, 22E46}
\keywords{Covariant representations, Actions of algebraic groups, Induced representations, Motions groups, Symmetric spaces}
\thanks{This research was supported by the projects OpART (ANR-23-CE40-0016) and CroCQG (ANR-25-CE40-5010) of the \emph{Agence Nationale de la Recherche}. It is based upon work from COST Action CaLISTA CA21109 supported by COST (European Cooperation in Science and Technology).}

\newtheorem{theorem}{Theorem}
\newtheorem{proposition}[theorem]{Proposition}
\newtheorem{lemma}[theorem]{Lemma}

\theoremstyle{definition}
\newtheorem{definition}[theorem]{Definition}
\newtheorem{remark}[theorem]{Remark}

\setlist[itemize]{leftmargin=0.7cm}

\newcommand{\dquo}{\backslash \!\! \backslash}

\begin{document}    

\begin{abstract}
    If $G$ is an algebraic affine group acting on an affine variety $X$, there is a natural notion of covariant representation for the pair $(G,X)$. In this paper, we classify the irreducible covariant representations for any such pair by adapting the Mackey machine to this algebraic setting. Next, we give applications for continuous representations of motion groups on Banach spaces and other related examples.
\end{abstract}

\maketitle

\section*{Introduction}

One of the fundamental applications of Mackey's theory of induction \cite{Mackey} is the classification of the unitary irreducible representations of semi-direct products of the form $K \ltimes V$, with $K$ a compact group and $V$ abelian. In modern terms, the reason why the Mackey machine applies to this example is the following: there is natural correspondence between unitary representations of $K \ltimes V$ and covariant representations \cites{Dana, Raeburn} of the C*-dynamical system $(C^*(V), K)$, where $C^*(V)$ denotes the maximal group C*-algebra of $V$. Thanks to this correspondence, the unitary dual of $K \ltimes V$ can be described from the orbit structure of the spectrum of $C^*(V)$, which identifies with the Pontryagin dual of $V$. When $K$ is a compact Lie group and $V$ a real vector space, one can show that this correspondence extends to admissible Banach representations of $K \ltimes V$, provided that we replace the C*-dynamical system $(C^*(V), K)$ by the action of an affine algebraic group (the complexification of $K$) on a polynomial algebra (the symmetric algebra of $\mathbb{C} \otimes_\mathbb{R} V$). Motivated by this example, the purpose of this paper is to study covariant representations associated to actions of affine algebraic group, first abstractly, then in relation with concrete example arising from Lie theory.

Let $G$ be an affine group defined over an algebraically closed field $k$ and let $X$ be an affine variety equipped with an algebraic action of $G$. We refer to $(G,X)$ as an \textit{affine action pair}. Let us simply call \textit{representation of $G$} any representation of $G$ which is the sum of its rational subrepresentations. Let us denote by $\tau$ the action by translations of $G$ on the algebra $k[X]$ of regular functions on $X$. We are interested in the following kind of representations.

\begin{definition} \label{module}
    A covariant representation of $(G,X)$ is a representation $\rho$ of $G$ on a $k[X]$-module $M$ such that
    $$\rho(g)f\psi = (\tau_gf)\rho(g)\psi \qquad (g \in G, f\in k[X], \psi \in M).$$
\end{definition}

In the present paper, we classify the irreducible covariant representations of affine action pairs when $k$ has characteristic zero (Section 2). As we will see, this can be reduced to the transitive case, for which we can use the main theorem of \cite{2authors}. We will give a new proof of the latter by following a more elementary approach.


As we mentioned above, one of our main motivation is to show that we can still apply imprimitivity arguments for non-unitary Banach representations of semi-direct products. As an application of our description of irreducible covariant representations, we classify in Section 3 the topologically completely irreducible representations of a large class of semi-direct product groups. In particular, we give a streamlined proof of Champetier--Delorme's classification of irreducible admissible representations of Cartan motion groups \cite{DelormeCh}. Note that this result has only been proved for motion groups associated to semisimple Lie groups with finite center. In what follows, we check that it extends to Cartan motion groups of arbitrary real reductive groups.

On the other hand, from the perspective of the Mackey analogy \cites{Afgoustidis1, MackeyAnalogy, MAhigson, MonkVoigt}, our recent work \cite{YGE} suggests that the analogues of Cartan motion groups for real semisimple quantum groups \cite{DCquantisation} should be affine action pairs of the form $(G^\theta, G /G^\theta)$, where $G$ is a simply connected complex semisimple group and $\theta$ a complexified Cartan involution associated to a real form of $G$. In Section 4, we make explicit the classification of irreducible covariant representations for these action pairs. To do so, we prove an analogue of Chevalley restriction theorem for $G^\theta$-bi-invariant regular functions on $G$.

\section{Preliminaries}

Our first goal is to summarize elements of algebraic geometry, essentially from the theory of algebraic groups, that we will use later. For the fist three subsections, we fix an algebraically closed field $k$ and consider varieties defined over $k$ which are not necessary irreducible.  If $X$ is a variety, we denote by $\mathscr{O}_X$ its structure sheaf and we write $k[X]$ for the algebra of regular functions on $X$.

\subsection{Modules over the ring of regular functions}

Let $X$ be an affine variety. We denote by $\mathrm{Sh}$ the functor associating to a $k[X]$-module its corresponding quasi-coherent $\mathscr{O}_X$-module. For any $k[X]$-module $M$ and any $f \in k[X]$, we write $U_f$ for $\{ x \in X : f(x) \neq 0 \}$ and denote by $M_f$ the localization of $M$ with respect to $\{f^n : n \geq 0\}$. We recall that the natural map $M_f \to \mathrm{Sh}M(U_f)$ is an isomorphism. In particular, we always have $\mathrm{Sh} M (X) = M$.

For any $x \in X$, we denote by $e_x$ the character of $k[X]$ defined by the evaluation at $x$. If $M$ is any $k[X]$-module and $x \in X$, we write $M|_x= k \otimes_{e_x} M$ and the natural map $M \to M|_x, m \mapsto m|_x = 1 \otimes m$ is called the evaluation of $M$ at $x$. Note that the exact sequence of $k[X]$-modules
$$0 \to x M \to M \to M|_x \to 0$$
is functorial in $M$. In the following, we shall use the following lemma.

\begin{lemma} \label{free}
Let $M$ be a finitely generated $k[X]$-module. If the map $x \mapsto \mathrm{dim}_k M|_x$ is locally constant on $X$ then $\mathrm{Sh}M$ is locally free.
\end{lemma}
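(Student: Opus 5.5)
The plan is to prove local freeness point by point, using Nakayama's lemma to produce local generators and then the constancy of the fibre dimension, together with reducedness of $k[X]$, to show there are no relations among them. Fix $x \in X$ and put $n = \dim_k M|_x$. Since $x \mapsto \dim_k M|_y$ is locally constant, we may first choose an open neighbourhood of $x$ on which this dimension equals $n$. Principal opens $U_f$ form a basis of the topology, so after shrinking we may assume it is of the form $U_{f_0}$. Since $M_{f_0}|_y = M|_y$ for $y \in U_{f_0}$, and $\mathrm{Sh}M|_{U_{f_0}} = \mathrm{Sh}(M_{f_0})$, we may replace $X$ by the affine variety $U_{f_0}$ and $M$ by $M_{f_0}$. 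We therefore assume that $\dim_k M|_y = n$ for every $y \in X$.

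Choose $m_1,\dots,m_n \in M$ whose images $m_i|_x$ form a basis of $M|_x$. Let $\varphi : k[X]^n \to M$ be the induced map and $C$ its cokernel. Then $C$ is finitely generated, and by right exactness of $k \otimes_{e_x} -$ we have $C|_x = 0$. By Nakayama's lemma applied to the local ring at the maximal ideal of $x$, $C$ vanishes after localizing at that ideal. Since $C$ is finitely generated, there is an $f$ with $f(x) \neq 0$ and $C_f = 0$, i.e.\ $\varphi_f : k[X]_f^n \to M_f$ is surjective. Replacing $X$ by $U_f$ once more, we may assume that $\varphi$ is surjective. Let $K = \ker \varphi$, so that we have an exact sequence $0 \to K \to k[X]^n \to M \to 0$.

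The main step is to show that $K = 0$, and this is where the hypothesis on the fibres is used. For every $y \in X$, right exactness gives an exact sequence $K|_y \to k^n \to M|_y \to 0$. Since $\dim_k M|_y = n$, the surjection $k^n \to M|_y$ is an isomorphism, so the image of $K|_y$ in $k^n$ is zero. Concretely, if $(a_1,\dots,a_n) \in K \subset k[X]^n$, then $a_i(y) = 0$ for all $i$ and all $y \in X$. Because $X$ is a variety, $k[X]$ is reduced, and $k$ is algebraically closed. By the Nullstellensatz, a regular function vanishing at every point of $X$ lies in every maximal ideal, hence in the nilradical, and is therefore zero. Thus $K = 0$ and $M \cong k[X]^n$ on this neighbourhood of $x$.

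Hence every point of $X$ has a principal open neighbourhood $U_f$ with $M_f$ free, so $\mathrm{Sh}M$ is locally free. The only genuinely delicate point is this last step. Its reliance on reducedness and on $k$ being algebraically closed, so that all points are $k$-points, is essential: over a non-reduced ring the statement fails.
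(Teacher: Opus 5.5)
Your proof is correct and follows essentially the same route as the paper's: lift a basis of $M|_x$, use Nakayama and finite generation to make the lifts generate on a neighbourhood, then combine constancy of $\dim_k M|_y$ with pointwise evaluation and reducedness of $k[X]$ to rule out relations. The only differences are presentational. You prove the local-generation step yourself where the paper cites a reference, you reduce to a principal open, and you phrase the no-relations step through the kernel $K$ of $k[X]^n \to M$.
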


\begin{proof}
    Assume that $x \mapsto \mathrm{dim}_k M|_x$ is locally constant. We fix $y \in X$ and consider a family $(m_i)_i$ of elements of $M$ such that $(m_i|_y)_i$ is a basis of $M|_y$. Since $M$ is finitely generated, there exists an open subset $U \subset X$ such that $(m_{i|U})_i$ generates $\mathrm{Sh}M_{|U}$, see \cite{Ducros}*{(3.3.15)}. Restricting $U$ if necessary, we can assume that $x \mapsto \mathrm{dim}_k M|_x$ is constant on $U$. Then $(m_i|_x)_i$ is a basis of $M|_x$ for all $x \in U$. For any open subset $V$ of $U$ and any family $(f_i)_i$ of elements of $k[V]$, if $\sum_i f_i (m_{i|V}) = 0$ then for every $x \in V$, we have $\sum_i f_i(x) (m_i|_x) = 0$, so $f_i= 0$ for every $i$. This proves that $(m_{i|U})_i$ is a basis of $\mathrm{Sh}M_{|U}$. We conclude that $\mathrm{Sh}M$ is locally free.
\end{proof}

\subsection{Actions of algebraic groups} \label{quotients}

Let $G$ be an algebraic group and let $X$ be a $G$-variety. A quotient of $X$ by $G$ in the sense of \cite{Borel}*{II.6.3} will be called \textit{geometric} and denoted by $G\backslash X$. It is unique up to isomorphism. Let $x \in X$ and let $G^x$ be its stabilizer. The orbit morphism $g \mapsto gx$ of $G$ onto the subvariety $Gx \subset X$ is a quotient of $G$ by $G^x$ (for the action by right translation) if and only if it is separable \cite{Borel}*{II.6.7}, which is automatic if $k$ has characteristic zero. We recall that $X$ always contains a closed orbit \cite{Borel}*{I.1.8}. Let us also mention that if $G$ is affine and $H$ is a closed subgroup of $G$ then $G/H$ always exists \cite{Borel}*{II.6.8}.

From now, let us assume that $G$ and $X$ are affine. When the algebra $k[X]^G$ of $G$-invariant regular functions on $X$ is finitely generated, its maximal spectrum defines an affine variety that we denote by $G \backslash \!\! \backslash X$ and call the \textit{affine categorical quotient} of $X$ by $G$. By construction, if the geometric quotient $G\backslash X$ exists and is affine, then $G\backslash \!\! \backslash X$ is well defined and coincides with $G \backslash X$.

Assume that $G$ is reductive. Then $k[X]^G$ is always finitely generated, so that $G \backslash \!\! \backslash X$ is always well defined. Moreover, the morphism $X \to G\dquo X$ induced by the inclusion $k[X]^G \subset k[X]$ is surjective and each of its fibers contains a unique closed orbit. If each orbit is closed (in particular if $G$ is finite) then $G\dquo X$ coincides with the geometric quotient of $X$ by $G$. We refer to \cite{Newstead}*{Chapter 3, §3} for more information.

\subsection{Induction for representations of affine groups}

The representations of algebraic groups considered in this paper are always assumed to be the sum of their (finite dimensional) rational subrepresentations. They will generally be infinite dimensional. Let $G$ be an affine group. By definition, the group law of $G$ induces a Hopf algebra structure on $k[G]$, see \cite{Borel}*{Chapter I, §1.5}. The category $\mathrm{Rep}(G)$ of representations of $G$ is then equivalent to the category of $k[G]$-comodules.

Let us fix a closed subgroup $H$ of $G$. If $V$ is any representation of $H$, let us denote by $(k[G]\otimes_k V)^H$ the space of $H$-invariant vectors of the tensor product representation $k[G]\otimes_k V$, where $k[G]$ is considered as a representation of $H$ for right translation. The action of $G$ on $k[G]$ by left translation induces a representation of $G$ on $(k[G]\otimes_k V)^H$. The latter is denoted by $I^G(V)$ and is called \textit{the representation of $G$ induced from $V$.}

The functor $I^G : \mathrm{Rep}(H) \to \mathrm{Rep}(G)$ is the right adjoint of the restriction functor $R_H :\mathrm{Rep}(G) \to \mathrm{Rep}(H)$. To be precise, if $W$ is a representation of $G$ and $\delta_W : W \to k[G] \otimes_k W$ denotes the associated comodule map, the natural bijection $\mathrm{Hom}_H(R_H(W),V) \to \mathrm{Hom}_G(W, I^G(V))$ is defined by $\phi \mapsto (\mathrm{id} \otimes \phi)\circ \delta_W$ and its inverse is the postcomposition by $e_1 \otimes \mathrm{id}_V$. This natural bijection will be referred to as the \textit{universal property of induction}.

By construction, note also that $I^G$ commutes with direct limits provided that the initial directed system is made of injective morphisms. In other words, for any directed system $(V_i \hookrightarrow V_j)_{i\leq j}$ of injective morphisms in $\mathrm{Rep}(H)$, we have $$I^G\!\left(\varinjlim V_i\right) = \varinjlim I^G(V_i).$$
We say that \textit{$I^G$ commutes with strict direct limits}.

The question of whether the map $e_1 \otimes \mathrm{id}_V : I^G(V) \subset k[G] \otimes_k V \to V$ is surjective will be crucial for our purpose. To conclude this section, let us give a brief account on this problem. The subgroup $H$ is said to be \textit{observable} if the above map is surjective for every representation $V$ of $H$. By the universal property of induction and the fact that $I^G$ commutes with strict direct limit, $H$ is observable if and only if every finite dimensional representation of $H$ is a $H$-homomorphic image of a finite dimensional representation of $G$. By considering contragredient representations, we see that this is in turn equivalent to the property that every finite dimensional representation of $H$ can be extended to a finite dimensional representation of $G$. One of the main results of \cite{BBHM} is that $H$ is observable if and only if $G/H$ is quasi-affine, that is, an open subvariety of an affine variety. We will only need the following weaker statement.

\begin{theorem}\label{surjectivitye1}
    If $G/H$ is affine then $H$ is observable.
\end{theorem}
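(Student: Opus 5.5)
We will use the reformulation given just before the statement: it suffices to show that for every representation $V$ of $H$ the map $e_1\otimes \mathrm{id}_V : I^G(V)\to V$ is surjective. Since $I^G$ commutes with strict direct limits and every representation is the union of its finite dimensional subrepresentations, we may assume $V$ is finite dimensional.

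The plan is to interpret $I^G(V)$ geometrically. Let $\pi : G\to G/H$ be the quotient morphism. The $H$-invariants $(k[G]\otimes_k V)^H$ are exactly the regular maps $s : G\to V$ satisfying $s(gh)=h^{-1}s(g)$, i.e. the global sections of the associated vector bundle $\mathcal{L}(V)=G\times^H V$ over $G/H$. First, $G\times^H V$ exists and $\mathcal{L}(V)$ is a coherent (indeed locally free) $\mathscr{O}_{G/H}$-module. Next, since $G/H$ is affine, $\mathcal{L}(V)$ is generated by its global sections. In the notation of the paper, $\mathcal{L}(V)=\mathrm{Sh}M$ with $M=I^G(V)$, so the fiber map $M\to M|_{x_0}$ at the base point $x_0=\pi(1)$ is surjective. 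Finally, we identify $M|_{x_0}$ with the fiber of the bundle at $x_0$, which is $V$ via $s\mapsto s(1)$. This identification is exactly the map $e_1\otimes\mathrm{id}_V$, which gives surjectivity.

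The main obstacle is justifying that the local structure of $\mathcal{L}(V)$ behaves well. We need local sections of $\pi$ (in the étale or fppf sense) or faithfully flat descent, to know that the sheaf of equivariant maps is quasi-coherent with fiber $V$. A cleaner alternative avoids bundles altogether. Since $\pi$ is faithfully flat and affine ($G$ and $G/H$ affine), descent along $k[G/H]=k[G]^H\to k[G]$ gives $k[G]\otimes_{k[G]^H}(k[G]\otimes V)^H\cong k[G]\otimes V$. This uses the isomorphism $G\times H\cong G\times_{G/H}G$. Tensoring this isomorphism with $e_1$ on $k[G]$ then shows that $(k[G]\otimes V)^H\to V$, $s\mapsto s(1)$, is surjective.

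I would try this descent argument first. The key point to verify is that $k[G]$ is an $H$-torsor over $k[G]^H$, which is exactly the statement that $G\to G/H$ is a quotient with $G/H$ affine. This holds by \cite{Borel}*{II.6.8} and the characterization of geometric quotients recalled in Section~\ref{quotients}.
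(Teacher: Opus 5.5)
Your argument is correct, but it takes a genuinely different route from the paper's.

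\textbf{The paper's route.} The paper never looks at the fibre of the induced module. It invokes the character criterion of \cite{BBHM}: if $\chi^{-1}$ is extensible whenever $\chi$ is, then $H$ is observable. It uses affineness of $G/H$ only once. Given a $\chi$-semi-invariant $f\in k[G]$ with $f(1)\neq 0$, openness of $\pi$ and affineness produce $\varphi\in k[G]^H$ with $\varphi(1)\neq 0$ vanishing on the zero set of $f$. The Nullstellensatz then gives $\varphi^n=\psi f$, where $\psi$ is a $\chi^{-1}$-semi-invariant.

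\textbf{Your route.} You prove surjectivity of $e_1\otimes\mathrm{id}_V$ directly, by faithfully flat descent along $k[G]^H\to k[G]$ using the torsor isomorphism $G\times H\cong G\times_{G/H}G$. Affineness enters exactly because descent must happen along a ring map, so that the descended module is $I^G(V)$ itself. Your final computation is right: the descent isomorphism is the multiplication map, so after tensoring with $e_1$ it becomes $m|_{x_0}\mapsto m(1)$.

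\textbf{What each buys.} Your route gives more. The isomorphism $k[G]\otimes_{k[G]^H}I^G(V)\cong k[G]\otimes_k V$ already shows that $\mathrm{Sh}\,I_x(V)$ is locally free, with fibre at $gx$ identified with $V$ by evaluation at $g$. It therefore subsumes the sheaf lemma and Proposition~\ref{ponctuel} of Section~2, and makes the construction of the families $(\xi_i)$ unnecessary. The cost is machinery the paper deliberately avoids, since it advertises an elementary approach: descent theory and the torsor property of $\pi$. The paper's proof needs only openness of $\pi$, the Nullstellensatz and the criterion of \cite{BBHM}, though that criterion is itself a nontrivial external theorem.

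\textbf{Your citation is too loose.} Borel's definition of a quotient does not contain faithful flatness of $\pi$ or the isomorphism $G\times H\cong G\times_{G/H}G$. These are theorems. One source is Demazure--Gabriel, or Jantzen's \emph{Representations of Algebraic Groups}, I.5.6, for the scheme-theoretic quotient; this agrees with Borel's quotient because $H$ is reduced, hence smooth. Alternatively, in characteristic zero you can argue directly: flatness follows from generic flatness plus homogeneity, and the fibre-product isomorphism follows from smoothness of $\pi$ (generic smoothness plus homogeneity again). You should state one of these explicitly rather than attributing the torsor property to Borel II.6.8.
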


\begin{proof}[Proof, based on \cite{BBHM}]
    Let us call extensible any (algebraic) character of $H$ that can be extended into a representation of $G$. An elementary criterion for the observability of $H$, given by \cite{BBHM}*{Theorem 1}, is the following: if for any extensible character $\chi$ of $H$, the dual character $\chi^{-1}$ is also extensible, then $H$ is observable.
    
    Assume that $G/H$ is affine. This means that the algebra $k[G]^H$ of right $H$-invariant regular functions on $G$ is finitely generated and its maximal spectrum is $G/H$. Let $\chi$ be an extensible character of $H$. Let $\tau$ denote the representation of $G$ on $k[G]$ by right translation. Let $f$ be a regular function on $G$ such that $f(1) \neq 0$ and $\tau_h f = \chi(h) f$ for every $h \in H$ (consider an appropriate matrix coefficient of a representation of $G$ extending $\chi$). Since the quotient morphism $\pi : G \to G/H$ is open and the zero set of $f$ is right $H$-invariant, it follows that $\pi(f^{-1}(0))$ is closed in $G/H$ and do not contain $\pi(1)$. Therefore, there exists $\varphi \in k[G]^H$ vanihsing on the zero set of $f$ such that $\varphi(1) \neq 0$ (we use here that $G/H$ is affine). By Hilbert's Nullstellensatz, $\varphi$ belongs to the radical of the ideal of $k[G]$ generated by $f$, that is, there exists a non-zero $\psi \in k[G]$ and $n \in \mathbb{N}$ such that $\varphi^n = \psi f$. Next, we have $\tau_h \psi = \chi^{-1}(h)\psi$ for all $h \in H$, which proves that $\chi^{-1}$ is extensible, since it can be realized as a subrepresentation of $(k[G], \tau)$. Using the criterion of the first paragraph, we conclude that $H$ is observable.
\end{proof}

\section{Irreducible covariant representations}

Let $G$ be an affine group and let $X$ be an affine right $G$-variety. In order to avoid separability problems, we assume that $k$ has characteristic zero. In particular, if $x \in X$, then $Gx$ identifies with $G/G^x$ as a $G$-variety. The goal of this section is to classify the irreducible covariant representations of $(G,X)$. 

Let us first introduce some vocabulary. Let $M$ be a covariant representation of $(G,X)$. A \textit{subrepresentation} of $M$ is a $k[X]$-submodule of $M$ which is stable by $G$. We call $M$ \textit{irreducible} if the zero subspace is its only proper subrepresentation. We say that it is \textit{finitely generated} if, as a covariant representation, it is generated by a finite subset. This is the same as being finitely generated as a $k[X]$-module, since the representation of $G$ on $M$ is assumed to be rationally spanned. If $N$ is another covariant representation of $(G,X)$, a $k[X]$-linear map $M \to N$ is said to be a \textit{$(G,X)$-homomorphism} if it intertwines the actions of $G$. The category of covariant representations of $(G,X)$ is denoted by $\mathrm{Rep}(G,X)$.

The following lemma essentially reduces the classification problem to the case where $G$ acts transitively on $X$.

\begin{lemma}\label{annihilator}
Let $M$ be a covariant representation $(G, X)$ and let $I$ be its annihilator in $k[X]$. If $M$ is irreducible, then there exists a closed orbit $Y \subset X$ such that $I$ is the ideal of functions vanishing on $Y$.
\end{lemma}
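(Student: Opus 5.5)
We first note that $I$ is a $G$-stable ideal of $k[X]$: if $f \in I$ and $\psi \in M$, then $(\tau_g f)\psi = \rho(g) f \rho(g)^{-1}\psi = 0$. It follows that the zero set $Z = V(I) \subset X$ is a closed $G$-stable subvariety. Moreover, $Z$ is nonempty as soon as $M \neq 0$, since $I \neq k[X]$ and we can apply the Nullstellensatz.

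The plan has two steps: show that $I$ is radical, then show that $Z$ is a single closed orbit.

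\textbf{Step 1: $I$ is radical.} Let $J = \sqrt{I}$, which is again $G$-stable, and consider
$$N = \{\psi \in M : J\psi = 0\}.$$
This is a $k[X]$-submodule of $M$, and it is $G$-stable because $J$ is. We claim $N \neq 0$. Choose a finite dimensional $G$-stable subspace $W \subset J$ spanning $J$ as an ideal; this is possible because $k[X]$ is noetherian and the $G$-action on $k[X]$ is rational. Then $J^n \subset I$ for some $n$. Take $m$ minimal with $J^m M = 0$. If $m = 0$ then $M = 0$, which is excluded, so $m \geq 1$ and $J^{m-1}M \neq 0$. Every element of $J^{m-1}M$ is killed by $J$, so $J^{m-1}M \subset N$ and $N \neq 0$. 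Irreducibility then gives $N = M$, hence $J \subset I$ and $I = J$ is radical. Therefore $I = I(Z)$ is the ideal of functions vanishing on $Z$.

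\textbf{Step 2: $Z$ is a single closed orbit.} By \cite{Borel}*{I.1.8}, the $G$-variety $Z$ contains a closed orbit $Y$. Suppose $Y \neq Z$, and let $I(Y) \supsetneq I$ be the ideal of $Y$, which is $G$-stable. Consider $I(Y)M$. It is a $G$-stable submodule by covariance. If $I(Y)M = 0$, then $I(Y) \subset I$, a contradiction. So $I(Y)M \neq 0$, and irreducibility forces $I(Y)M = M$.

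To reach a contradiction we localize at a point $y \in Y$. Because $M$ may not be finitely generated, Nakayama does not apply directly, so we first reduce to the finitely generated case. Any nonzero $\psi \in M$ generates a $G$-stable submodule, namely the $k[X]$-span of the finite dimensional space $\mathrm{span}(G\psi)$. That submodule is finitely generated, and by irreducibility it equals $M$. Hence $M$ is finitely generated.

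Now $M$ is a finitely generated module over $k[Z] = k[X]/I$. Since $Z$ is the support of $M$ (it is cut out by the annihilator) and $y \in Y \subset Z$, the localization $M_{\mathfrak m_y}$ is nonzero. But $I(Y)M = M$ and $I(Y) \subset \mathfrak m_y$ give $\mathfrak m_y M_{\mathfrak m_y} = M_{\mathfrak m_y}$, and Nakayama then forces $M_{\mathfrak m_y} = 0$. This contradiction shows $Z = Y$, a closed orbit, and $I = I(Y)$.

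The main obstacle is Step 2, specifically replacing the naive use of irreducibility with the Nakayama argument. This needs the finite generation of $M$ established above, and it needs that the support of a finitely generated module is exactly $V(\mathrm{Ann})$. A cleaner alternative is to evaluate at $y$: $M|_y \neq 0$ by Nakayama, while $I(Y)M = M$ gives $M = \mathfrak m_y M$, that is, $M|_y = 0$.
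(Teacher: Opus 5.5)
Your proof is correct and follows essentially the paper's route: irreducibility gives finite generation, irreducibility then forces $JM=M$ for a $G$-stable ideal $J\supsetneq I$, and Nakayama yields the contradiction. The paper runs this argument once for an arbitrary $G$-stable $J\supsetneq I$, using the global form $(1+J)\cap I\neq\emptyset$, so that $I$ is maximal among proper $G$-stable ideals. That gives radicality (take $J=\sqrt{I}$) and minimality of $Z$ at the same time, so your separate Step 1 is valid but unnecessary.
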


\begin{proof} The irreducibility of $M$ implies that it is finitely generated, hence it is finitely generated as a $k[X]$-module. If $J$ is a $G$-invariant ideal of $k[X]$ containing $I$ properly, then $JM$ is a non-zero covariant subrepresentation of $M$, so $JM = M$ and by applying Nakayama's lemma, we get that $(1 + J) \cap I \neq \emptyset$, hence $J = k[X]$. This shows that $I$ is a maximal proper $G$-invariant ideal of $k[X]$. It follows that its zero locus $Z$ is a minimal closed $G$-invariant subset and that $I = \sqrt{I} = \{ f \in k[X] : f_{|Z} = 0 \}$. Since $Z$ contains at least one closed $G$-orbit, it is itself a closed orbit.
\end{proof}

\subsection{The classification} Now we need to construct enough covariant representations. For that, we use induction from stabilizers of points of $X$.

Let $x \in X$ and $V$ a representation of $G^x$. The morphism $\varphi_x : g \in G \mapsto gx \in X$ induces a $k[X]$-algebra structure on $k[G]$. Combined with the representation of $G$ by left translation, this induces a covariant representation of $(G,X)$ on $k[G] \otimes_k V$. Then $I^G(V)$ is a covariant subrepresentation of $k[G] \otimes_k V$, that we denote by $I_x(V)$.

Let us define an \textit{induction datum} as a pair $(x,V)$, with $x$ an element of $G$ generating a closed orbit and $V$ an irreducible representation of $G^x$. Two induction data $(x,V)$ and $(x',V')$ are said to be equivalent if there exists $g \in G$ such that $x' = gx$ and a linear isomorphism $T : V \to V'$ such that
    \begin{equation}\label{twist} T(h\cdot v) = (ghg^{-1})\cdot T(v) \qquad (v \in V, h \in G^{x}).\end{equation}
We can now classify the irreducible covariant representations of $(G,X)$ as follows.

\begin{theorem}\label{classification}
    For any induction datum $(x,V)$, the covariant representation $I_x(V)$ is irreducible. Conversely, any irreducible covariant representation is isomorphic to one of that form. Moreover, for a given pair of induction data $(x,V)$ and $(x',V')$, the covariant representations $I_x(V)$ and $I_{x'}(V')$ are isomorphic if and only if $(x,V)$ is equivalent to $(x',V')$.
\end{theorem}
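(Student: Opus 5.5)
The plan is to reduce to the transitive case and then prove an equivalence of categories between covariant representations of $(G, Y)$, with $Y = Gx$ closed, and representations of $G^x$. Here the functor $M \mapsto M|_x$ is inverse to $V \mapsto I_x(V)$.

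\textbf{Reduction.} First observe that $I_x(V)$ is annihilated by the ideal of $Y = Gx$. Indeed, $k[X]$ acts on $k[G]$ through $\varphi_x^*$, which factors through $k[Y]$. So $I_x(V)$ is really a covariant representation of $(G,Y)$. Conversely, by Lemma \ref{annihilator}, an irreducible $M$ has annihilator equal to the ideal of some closed orbit $Y$, and is thus a covariant representation of $(G,Y)$. Moreover, $(G,Y)$-subrepresentations and $(G,X)$-subrepresentations coincide. Since $\mathrm{char}\, k=0$ and $Y$ is closed, hence affine, we have $Y \cong G/G^x$ with $G/G^x$ affine. So it suffices to treat $X = G/H$ affine, with $H = G^x$ and $x = \pi(1)$.

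\textbf{Transitive case.} For a covariant representation $M$ of $(G,G/H)$, the fiber $M|_x = M/xM$ carries a representation of $H$, since $H$ fixes $x$ and therefore preserves $xM$. This gives a functor $M \mapsto M|_x$. The universal property of induction, applied to the $H$-map $M \to M|_x$, gives a $G$-map $\Phi_M : M \to I^G(M|_x)$, namely $\Phi_M = (\mathrm{id}\otimes \mathrm{ev}_x)\circ\delta_M$. A direct computation using covariance shows that $\Phi_M$ is $k[G/H]$-linear.

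I will then show that $\Phi_M$ is an isomorphism. This is the main obstacle. My approach is to trivialize via the multiplication morphism $G \times M \to M$. Concretely, consider the pullback of $\mathrm{Sh}M$ along $\pi : G \to G/H$. The $G$-equivariance gives an isomorphism $\pi^*\mathrm{Sh}M \cong \mathcal{O}_G \otimes_k M|_x$, obtained by translating fibers back to $x$: the map $g \mapsto \rho(g^{-1})$ identifies $M|_{gx}$ with $M|_x$, and the comodule structure makes this algebraic. Taking $H$-invariants then recovers $M = (k[G]\otimes_k M)^{H}$-type descent, i.e. $M \cong (k[G]\otimes M|_x)^H = I^G(M|_x)$. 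An alternative route, if the descent argument is awkward, is to reduce to finitely generated $M$ (both functors commute with strict direct limits). For such $M$, homogeneity makes $\dim M|_y$ constant, so $\mathrm{Sh}M$ is locally free by Lemma \ref{free}. One then checks that $\Phi_M$ is an isomorphism fiberwise. Injectivity follows because a section vanishing at all $gx$ is zero; surjectivity follows from Theorem \ref{surjectivitye1}, which gives that $e_1\otimes\mathrm{id}: I^G(V)\to V$ is onto, combined with a Nakayama argument. In the same way, Theorem \ref{surjectivitye1} shows $I_x(V)|_x \cong V$. Hence $M\mapsto M|_x$ and $I_x$ are mutually inverse equivalences between $\mathrm{Rep}(G,G/H)$ and $\mathrm{Rep}(H)$.

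\textbf{Conclusions.} Since the equivalence preserves subobject lattices, $I_x(V)$ is irreducible if and only if $V$ is. Also, every irreducible $M$ is isomorphic to $I_x(M|_x)$ with $M|_x$ irreducible. For the isomorphism criterion, suppose first that $I_x(V)\cong I_{x'}(V')$. Then the annihilators coincide, so $Gx=Gx'$ and $x'=gx$ for some $g$. The map $\rho(g): M|_x \to M|_{gx}$ then provides a $T$ satisfying \eqref{twist}, because $G^{x'} = gG^xg^{-1}$. Conversely, given equivalent data $(x,V)$ and $(x',V')$, the map $F\mapsto (\mathrm{id}\otimes T)(F(\,\cdot\, g))$, twisted appropriately by right translation by $g^{-1}$, is an explicit isomorphism $I_x(V)\to I_{x'}(V')$. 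Alternatively, this follows from the equivalence by noting that $I_{x'}(V')|_{x}\cong V$ through $T$.
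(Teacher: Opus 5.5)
Your reduction to a closed orbit, your unit map $\Phi_M$ (the paper's $\iota_M$), your injectivity argument and your isomorphism criterion all agree with the paper. The gap is the central step of the transitive case, which you assert rather than prove: the identification $I_x(V)|_x\cong V$ does not follow ``in the same way'' from Theorem \ref{surjectivitye1}.

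Observability only says that $e_1\otimes\mathrm{id}\colon I_x(V)\to V$ is onto. Since this map kills $\mathfrak{m}_x I_x(V)$, where $\mathfrak{m}_x\subset k[X]$ is the ideal of $x$, it gives a \emph{surjection} $\epsilon_V\colon I_x(V)|_x\to V$ and says nothing about its kernel. Injectivity of $\epsilon_V$ amounts, for finite-dimensional $W$, to $I_x(W)$ being finitely generated with $\mathrm{Sh}\,I_x(W)$ locally free of rank $\dim W$. Your surjectivity of $\Phi_M$ needs exactly this. The triangle identity only says that $M|_x\to I_x(M|_x)|_x\xrightarrow{\epsilon_{M|_x}}M|_x$ is the identity, so the fiber of $\Phi_M$ at $x$ is onto only if $\epsilon_{M|_x}$ is injective. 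Your Nakayama step also tacitly uses that $I_x(M|_x)$ is finitely generated.

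The missing idea is the paper's local-frame argument:
\begin{itemize}
\item Pick $\xi_i\in I_x(W)$ with $(\xi_i|_1)_i$ a basis of $W$; this is where observability enters.
\item Their matrix $a$ in a $k[G]$-basis of $k[G]\otimes_kW$ has $\det a$ nonvanishing on a right $H$-saturated open set $D\ni 1$, on which the $\xi_i$ form a $k[D]$-basis of $k[D]\otimes_kW$.
\item The coefficients of any $H$-equivariant section in this basis are right $H$-invariant. Using $\mathrm{Sh}\,I_x(W)(U)=(k[\varphi_x^{-1}(U)]\otimes_kW)^H$, the $\xi_i$ therefore form a basis of $\mathrm{Sh}\,I_x(W)$ over $\varphi_x(D)$.
\end{itemize}
This gives Proposition \ref{ponctuel}, after which your alternative outline goes through.

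Your descent route could bypass this, but as written it is only a slogan. The trivialization $k[G]\otimes_{k[X]}M\cong k[G]\otimes_kM|_x$ is fine. However, the descent statement you need is $M\cong(k[G]\otimes_{k[X]}M)^H$, not $(k[G]\otimes_kM)^H$, which is just $k[X]\otimes_kM$. Proving it requires $H$-invariants to commute with $-\otimes_{k[X]}M$. That needs flatness of $M$ over $k[X]$ (available from Lemma \ref{free}, homogeneity and a direct-limit argument) or faithful flatness of $\varphi_x$, and you mention neither.

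Even then you only obtain the unit. For the counit you still have to argue, for instance, that $I_x(\epsilon_V)$ is inverse to $\Phi_{I_x(V)}$. Then $I_x(\ker\epsilon_V)=0$ by left exactness of $I_x$, hence $\ker\epsilon_V=0$ by observability.

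Finally, when passing from finitely generated to arbitrary $M$, commuting $I_x$ with $\varinjlim M_i|_x$ requires the maps $M_i|_x\to M_j|_x$ to be injective. The paper deduces this from the equivalence on finitely generated objects. The fact that both functors commute with strict direct limits does not provide it by itself.
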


We first show how to reduce the proof to the case where $X$ is transitive. The proof of the theorem in the latter case is postponed until the end of the section.

\begin{proof}[Proof of the theorem assuming that is is true in the transitive case.]
    If $Y$ is closed orbit of $X$, then the comorphism of the inclusion $Y\subset X$ induces on every covariant representation of $(G,Y)$ a structure of covariant representation of $(G,X)$. This establishes an equivalence between the category $\mathrm{Rep}(G,Y)$ and the full subcategory of $\mathrm{Rep}(G,X)$ consisting of covariant representations whose annihilator in $k[X]$ is the ideal of regular functions vanishing on~$Y$.
    
    Assume that the theorem is true for a transitive action of $G$. If $(x,V)$ is an induction datum, then $I_x(V)$ is irreducible as a covariant representation of $(G,Gx)$, hence irreducible as a covariant representation of $(G,X)$. The second statement of the theorem derives from Lemma \ref{annihilator}. For the last statement, just note that two isomorphic covariant representations of $(G,X)$ have the same annihilator in~$k[X]$.
\end{proof}

\subsection{The transitive case} From now, we assume that $G$ acts transitively on $X$. If $x \in X$ and $M$ is a covariant representation of $(G,X)$ then the action of $G^x$ on $M$ induces a representation of $G^x$ on $M|_x$. The classification theorem will be a consequence of the following result.

\begin{theorem}\label{equivalence}
    Assume that $G$ acts transitively on $X$. Let $x$ be a point of $X$ and let us denote by $\mathrm{ev}_x$ the functor $M \mapsto M|_x$, from $\mathrm{Rep}(G,X)$ to $\mathrm{Rep}(G^x)$. Then $\mathrm{ev}_x$ and $I_x : \mathrm{Rep}(G^x) \to \mathrm{Rep}(G,X)$ are inverse functors.
\end{theorem}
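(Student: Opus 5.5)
We will construct natural isomorphisms $\mathrm{ev}_x \circ I_x \cong \mathrm{id}_{\mathrm{Rep}(G^x)}$ and $I_x\circ \mathrm{ev}_x \cong \mathrm{id}_{\mathrm{Rep}(G,X)}$. Throughout, $H = G^x$ and $X = Gx \cong G/H$. Since $X$ is affine, $H$ is observable by Theorem \ref{surjectivitye1}.

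\emph{First isomorphism.} For a representation $V$ of $H$, the map $e_1\otimes \mathrm{id}_V : I_x(V)\to V$ is $H$-equivariant. It kills $x I_x(V)$, because $f\in k[X]$ acts on $k[G]$ through $\varphi_x^*$, and $(\varphi_x^* f)(1) = f(x)$. So it factors through an $H$-map $I_x(V)|_x \to V$, and this map is surjective by observability. For injectivity, I would first treat finite-dimensional $V$ and then pass to general $V$ using the fact that $I^G$ commutes with strict direct limits (evaluation at $x$ is right exact and commutes with direct limits). For finite-dimensional $V$, $I_x(V)$ is the module of global sections of the $G$-equivariant vector bundle $G\times^H V$ over $G/H = X$; this uses that $\pi : G\to G/H$ is a quotient. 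Its fiber at $x$ is exactly $V$, which gives injectivity. A more elementary route: show that $\mathrm{Sh}(k[G]\otimes V)^H$ is locally free of rank $\dim V$ by applying Lemma \ref{free}. The dimension of the fibers is constant by $G$-homogeneity, and finite generation follows from $k[G]\otimes V$ being a finitely generated module over $k[G]^H = k[X]$ ($H$-invariants of a finite module, $G/H$ affine). Then compare dimensions with the surjection above.

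\emph{Second isomorphism.} For a covariant representation $M$, the universal property of induction applied to the $H$-map $M\to M|_x$ gives the $G$-map
$$\Phi_M = (\mathrm{id}\otimes \mathrm{ev}_x)\circ\delta_M : M \to I_x(M|_x).$$
Covariance should make $\Phi_M$ $k[X]$-linear: concretely, $\Phi_M(\psi)(g) = (g^{-1}\psi)|_x$, and $g^{-1}(f\psi) = (\tau_{g^{-1}}f)(g^{-1}\psi)$ evaluates at $x$ to $f(gx)(g^{-1}\psi)|_x$. Evaluating $\Phi_M$ at $x$ and composing with the first isomorphism gives the identity of $M|_x$. Hence $\Phi_M|_x$ is an isomorphism, and by $G$-equivariance $\Phi_M|_y$ is an isomorphism at every $y\in X$.

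\emph{Main obstacle.} The hard step is upgrading "$\Phi_M$ is an isomorphism on every fiber" to "$\Phi_M$ is an isomorphism", because $M$ need not be finitely generated. First reduce to finitely generated $M$: $M$ is the directed union of its finitely generated covariant subrepresentations (they are rationally spanned), and both sides commute with these strict direct limits. For finitely generated $M$, the function $y\mapsto \dim M|_y$ is constant by homogeneity. By Lemma \ref{free}, $\mathrm{Sh}M$ is then locally free, and so is $\mathrm{Sh}\,I_x(M|_x)$. A morphism of locally free sheaves of finite rank that is an isomorphism on all fibers is an isomorphism, by Nakayama applied at the local rings. Since $\mathrm{Sh}N(X)=N$, this gives that $\Phi_M$ is bijective.

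One subtlety remains: the functors must be exact enough for the direct-limit reduction to be compatible with the fiberwise argument. I expect to handle this by checking that $\Phi$ is natural, so that the isomorphisms on finitely generated pieces glue.
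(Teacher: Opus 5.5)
Your overall architecture matches the paper's. You get surjectivity of $I_x(V)|_x\to V$ from observability (Theorem \ref{surjectivitye1}), pass to finite-dimensional $V$ and finitely generated $M$ by direct limits, use Lemma \ref{free} plus homogeneity for $\mathrm{Sh}\,M$, and argue fiberwise for $\Phi_M$; your Nakayama-on-stalks step carries the same content as Lemma \ref{pointwise}. The divergence is the finite-dimensional injectivity of $I_x(V)|_x\to V$, i.e.\ Proposition \ref{ponctuel}. There your argument only goes through via the first route, which imports that step wholesale. Saying that $I_x(V)$ is the module of sections of a vector bundle $G\times^H V$ with fiber $V$ at $x$ \emph{is} the statement that $U\mapsto (k[\varphi_x^{-1}(U)]\otimes_k V)^H$ is locally free of rank $\dim V$ with the right fiber. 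This needs more than ``$\varphi_x$ is a quotient'': you need either descent along $\varphi_x$ or an explicit argument. As a citation it is acceptable, but it is exactly what the paper proves by hand.

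Your ``more elementary route'' has a genuine gap, for two reasons.
\begin{enumerate}
\item $k[G]\otimes_k V$ is not finitely generated over $k[G]^H=k[X]$ once $\dim H>0$, since $k[G]$ would then be integral over $k[X]$ and the Krull dimensions differ. What you actually need is finite generation of the invariants $(k[G]\otimes_k V)^H$. That is not a general fact when $H$ is not reductive, nothing forces $H$ to be reductive here, and it is essentially what you are trying to prove.
\item Even granting it, Lemma \ref{free} gives local freeness of rank $\dim I_x(V)|_x$. Your surjection only yields $\dim I_x(V)|_x\geq\dim V$, which is the wrong inequality for injectivity.
\end{enumerate}
The missing idea is the paper's. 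By observability, choose $\xi_i\in I_x(V)$ such that $\bigl((e_1\otimes\mathrm{id}_V)\xi_i\bigr)_i$ is a basis of $V$. The determinant of their coefficient matrix with respect to a basis $(1\otimes v_j)_j$ of $k[G]\otimes_k V$ is a right $H$-semi-invariant, so its non-vanishing locus $D\ni 1$ is right $H$-saturated. On $D$ the $\xi_i$ form a $k[D]$-basis of $k[D]\otimes_k V$, and the coefficients of any $\psi\in I_x(V)$ in this basis are right $H$-invariant. Hence they descend to the open set $\varphi_x(D)$. This gives a local basis of size $\dim V$ and the fiber identification at once, using only observability and the quotient property of $\varphi_x$.

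Finally, in the reduction to finitely generated $M$, calling $M|_x=\varinjlim M_i|_x$ a strict limit is unjustified. Since $\mathrm{ev}_x$ is not left exact, $M_i\subset M_j$ does not give injectivity of $M_i|_x\to M_j|_x$, and naturality of $\Phi$ does not supply it by itself. The paper closes this by noting that, once the finitely generated case is done, $\mathrm{ev}_x$ is an equivalence from finitely generated covariant representations onto finite-dimensional $H$-representations, so it preserves injections. Alternatively, you can check directly, using local finiteness, that taking $H$-invariants, and hence $I_x$, commutes with arbitrary directed colimits of rational $H$-modules. With either fix, your gluing-by-naturality argument works.
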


Although it is not stated exactly in these terms, this theorem already appears in \cite{2authors} with a proof relying on results of \cite{3authors}. We propose a new proof, more elementary, which focuses on the local structure of induced modules.

Let us also emphasize that the characteristic zero hypothesis on $k$ is only needed to ensure that the morphism $\varphi_x : g \in G \mapsto gx \in Gx$ is separable, hence a geometric quotient of $G$ by $G^x$ (see Section 1.2).

 We begin with a series of lemmas. Let us fix $x \in X$ and write $H = G^x$. In the following, $W$ denotes any finite dimensional representation of $H$. For simplicity, we denote by $\varphi_x^*$ both the inclusion $I_x(W) \hookrightarrow k[G]\otimes_k W$ and the comorphism $k[X] \hookrightarrow k[G]$ induced by $\varphi_x$.

\begin{lemma}
     The sheaf $U \mapsto (k[\varphi_x^{-1}(U)]\otimes_k W)^H$ is precisely the quasi-coherent $\mathscr{O}_X$-module associated to $I_x(W)$.
\end{lemma}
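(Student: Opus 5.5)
Write $\mathscr{F}$ for the presheaf $U \mapsto (k[\varphi_x^{-1}(U)]\otimes_k W)^H$, where $H$ acts on $k[\varphi_x^{-1}(U)]$ by right translation; this makes sense because $\varphi_x^{-1}(U)$ is right $H$-stable. First I check that $\mathscr{F}$ is a sheaf of $\mathscr{O}_X$-modules. Since $\varphi_x$ is a geometric quotient (characteristic zero), $U \mapsto k[\varphi_x^{-1}(U)]$ is a sheaf of $\mathscr{O}_X$-algebras, namely $(\varphi_x)_*\mathscr{O}_G$. Tensoring with the finite dimensional space $W$ preserves the sheaf property, because $W \cong k^n$ as a vector space. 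Taking $H$-invariants also preserves it, since invariants are cut out by equations compatible with restriction. Moreover $\mathscr{F}(X) = I_x(W)$ by definition, since $\varphi_x^{-1}(X) = G$.

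By the correspondence between quasi-coherent sheaves and modules on affine $X$, it then suffices to show that $\mathscr{F}$ is quasi-coherent. Concretely, for each $f \in k[X]$ the natural map $I_x(W)_f \to \mathscr{F}(U_f)$ should be an isomorphism; then $\mathscr{F}$ and $\mathrm{Sh}\,I_x(W)$ agree on the basis of principal opens $U_f$, compatibly with restrictions, hence they agree as sheaves. We have $\varphi_x^{-1}(U_f) = G_{\varphi_x^*f}$, the principal open of $G$ defined by $\varphi_x^* f$, so $k[\varphi_x^{-1}(U_f)] = k[G]_{\varphi_x^*f}$. Therefore $k[\varphi_x^{-1}(U_f)]\otimes W = (k[G]\otimes W)_f$, where $f$ acts through $\varphi_x^*$.

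The key step is to commute $H$-invariants with localization at $f$. This works because $\varphi_x^*f$ is right $H$-invariant, so $f$ acts by an $H$-equivariant operator, and multiplication by $f^n$ is invertible on the localized module. In detail:
\begin{itemize}
\item \emph{Injectivity.} Since $k[G]$ is reduced and $\varphi_x^* f$ is a nonzerodivisor on $k[G]\otimes W$ whenever $U_f$ is nonempty, the map $(k[G]\otimes W)^H \to (k[G]\otimes W)_f$ localizes injectively. (If $U_f$ is empty, both sides are zero.)
\item \emph{Surjectivity.} Take $\xi/f^n$ with $\xi \in k[G]\otimes W$ that is $H$-invariant in the localization. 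Then $h\xi - \xi$ is killed by $f^n$ in $k[G]\otimes W$, which is torsion-free, so $h\xi = \xi$. Hence $\xi \in I_x(W)$.
\end{itemize}
A minor care point is that "$H$-invariant" should be understood pointwise for the group action rather than comodule-theoretically. This is harmless because $H$ acts on the localization by algebra automorphisms fixing $f$.

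The main obstacle I expect is the sheaf condition, in particular identifying $U \mapsto k[\varphi_x^{-1}(U)]$ with regular functions on the preimage when $U$ is not affine. This is exactly where one needs $\varphi_x$ to be a geometric quotient. However, once quasi-coherence is checked on principal opens, the sheaf property of $\mathscr{F}$ on general $U$ follows from gluing, so the essential content is the localization computation above.
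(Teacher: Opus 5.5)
Your strategy is the paper's: compare the two sheaves on principal opens, use $\varphi_x^{-1}(U_f)=U_{\varphi_x^*f}$ to get $k[\varphi_x^{-1}(U_f)]\otimes_k W=(k[G]\otimes_k W)_f$, and check that $H$-invariants commute with this localization. The gap is in how you justify that last step. Both of your bullets rely on the claim that $s=\varphi_x^*f$ is a nonzerodivisor on $k[G]\otimes_k W$ whenever $U_f\neq\emptyset$. This is false in the generality of the paper, where $G$ need not be connected and $X=Gx$ need not be irreducible.

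For example, let $G=\mathbb{Z}/2\times\mathbb{Z}/2$ act on $X=\mathbb{Z}/2$ through the first factor. Take $x=0$, so $H=\{0\}\times\mathbb{Z}/2$, and take $W=k$ and $f=\delta_0$. Then $s$ is the indicator function of $\{0\}\times\mathbb{Z}/2$, and $s\,\delta_{(1,0)}=0$. Now take $\xi=\delta_{(1,0)}$ and $n=0$. The element $\xi/f^0$ is zero in the localization, hence $H$-invariant. But $\xi$ is not right $H$-invariant, so the inference ``hence $\xi\in I_x(W)$'' in your surjectivity argument fails. Note also that $(h\xi-\xi)/f^n=0$ only says that $h\xi-\xi$ is killed by some power $f^{m_h}$, not by $f^n$. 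A priori $m_h$ depends on $h$, which matters once torsion-freeness is gone.

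The repair is short. Injectivity needs no torsion-freeness: the inclusion $I_x(W)\hookrightarrow k[G]\otimes_k W$ stays injective after localizing at $f$, because localization is exact. This is exactly the paper's argument.

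For surjectivity, use that $k[G]$ is reduced and $k[G]\otimes_k W$ is free. If $s^m a=0$, then coordinatewise (in a basis of $W$) $(sa)^m=0$, so $sa=0$. Hence $s(h\xi-\xi)=0$ for every $h\in H$, uniformly in $h$. Since $s$ is right $H$-invariant, $s\xi$ is $H$-invariant, i.e.\ $s\xi\in I_x(W)$. Then $\xi/f^n=s\xi/f^{n+1}$ lies in the image of $I_x(W)_f$.

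A minor point: the geometric-quotient property plays no role in this lemma. The presheaf $U\mapsto k[\varphi_x^{-1}(U)]$ is the sheaf $(\varphi_x)_*\mathscr{O}_G$ for any morphism, and the computation on principal opens is purely algebraic. So characteristic zero is not used here.
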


\begin{proof}
    It is enough to check that for any $f \in k[X]$, the $k[U_f]$-linear map \begin{equation}\label{linearmap} I_x(W)_f = \left\{(k[G] \otimes_k W)^H\right\}_{f}  \longrightarrow (k[\varphi_x^{-1}(U_{f})]\otimes_k W)^H,\end{equation} induced by $\varphi_x^*$ and restriction to $U_f$, is an isomorphism. The surjectivity of (\ref{linearmap}) is easily checked by using that $k[\varphi_x^{-1}(U_f)] = k[U_{\varphi_x^*f}] = k[G]_{f}$. Moreover, as a $k[U_f]$-module, $k[\varphi_x^{-1}(U_f)] \otimes_k W$ identifies with $(k[G] \otimes_k W)_{f}$. Since the localization is an exact functor, it follows that (\ref{linearmap}) is also injective. 
\end{proof}

For any $g \in G$, the evaluation map at $g$ of the free $k[G]$-module $k[G] \otimes_k W$ is $e_g \otimes \mathrm{id}_W : k[G] \otimes_k W \to W$. Accordingly, for any $g \in G$ and $w \in k[G] \otimes_k W $, we write $w |_g = (e_g \otimes \mathrm{id}_W)(w)$. 

\begin{lemma}
    Let $g \in G$. If $(\xi_i)_i$ is a collection of elements of $I_x(W)$ such that $(\varphi_x^*\xi_i|_g)_i$ is a basis of $W$, then there exists an open subset $U$ of $X$ such that
    \begin{enumerate}[label = (\roman*)]
        \item \label{1} $(\xi_{i | U})_i$ is a basis of $\{\mathrm{Sh}\, I_x(W)\}_{|U}$,
        \item \label{2} $(\varphi_x^*\xi_i|_{g'})_i$ is a basis of $W$ for every $g' \in \varphi_x^{-1}(U)$.
    \end{enumerate}
\end{lemma}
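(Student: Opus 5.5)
Write $n=\dim W$ and let $\xi_1,\dots,\xi_n$ be the given elements. The plan is to produce the open set $U$ from a single regular function on $X$: a determinant built from the $\varphi_x^*\xi_i$. Fix a basis $(w_j)_j$ of $W$ and write $\varphi_x^*\xi_i=\sum_j a_{ij}\otimes w_j$ with $a_{ij}\in k[G]$. Put $D=\det(a_{ij})\in k[G]$.

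First I show that $D$ comes from $X$. Each $\varphi_x^*\xi_i$ is $H$-invariant for right translation on $k[G]$ tensored with the action on $W$. Hence for $h\in H$ we have $(\varphi_x^*\xi_i)|_{g'h}=h^{-1}\cdot(\varphi_x^*\xi_i)|_{g'}$. Writing $\rho_W$ for the representation of $H$ on $W$, this gives $D(g'h)=\det(\rho_W(h))^{-1}D(g')$. So $D$ is only an $H$-semi-invariant, not an invariant.

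To obtain a genuine $k[X]$-element I use the top exterior power. The vector $\xi_1\wedge\dots\wedge\xi_n$ lives in $I_x(\Lambda^nW)$, which is a rank-one induced module. Instead of passing to it, I would argue more directly as follows. The zero set $Z(D)\subset G$ is right $H$-stable. Since $\varphi_x$ is a geometric quotient (characteristic zero), it is open and surjective. Therefore $\varphi_x(G\setminus Z(D))$ is an open subset $U_0$ of $X$, and $\varphi_x^{-1}(U_0)=G\setminus Z(D)$ because this set is $H$-saturated. The point $gx$ lies in $U_0$ since $D(g)\neq0$ by hypothesis. Now choose $f\in k[X]$ with $f(gx)\ne0$ and $U_f\subset U_0$, and set $U=U_f$. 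Property (ii) then holds, because $D$ is nonzero on $\varphi_x^{-1}(U)$.

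For (i), I use the previous lemma, which identifies $\mathrm{Sh}\,I_x(W)(V)$ with $(k[\varphi_x^{-1}(V)]\otimes_kW)^H$ for every open $V\subset U$. The proof then has two parts.

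\textbf{Generation.} Let $\eta\in(k[\varphi_x^{-1}(V)]\otimes W)^H$. On $\varphi_x^{-1}(V)$ the matrix $(a_{ij})$ is invertible, so by Cramer's rule $\eta=\sum_i c_i\,\varphi_x^*\xi_i$ with uniquely determined $c_i\in k[\varphi_x^{-1}(V)]$. Both $\eta$ and the $\xi_i$ are $H$-invariant, and the $\xi_i|_{g'}$ form a basis at every point. Uniqueness of the coefficients therefore forces each $c_i$ to be right $H$-invariant.

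\textbf{Descent.} Because $\varphi_x$ is a geometric quotient, $H$-invariant regular functions on $\varphi_x^{-1}(V)$ are exactly $k[V]$. Hence $c_i\in k[V]$.

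Linear independence over $k[V]$ is immediate by evaluating at points $g'\in\varphi_x^{-1}(V)$, exactly as in the proof of Lemma \ref{free}.

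The main obstacle is the descent step: it needs both the saturation $\varphi_x^{-1}(\varphi_x(G\setminus Z(D)))=G\setminus Z(D)$ and the fact that $\mathscr O_X(V)=k[\varphi_x^{-1}(V)]^H$. Both are part of the definition of a geometric quotient in \cite{Borel}*{II.6.3}, and this is where separability, and hence characteristic zero, enters.
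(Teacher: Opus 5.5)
Your proposal is correct and follows essentially the same route as the paper. In both, the nonvanishing locus of the determinant of the coefficient matrix is right $H$-saturated, and its image under $\varphi_x$ gives the open set $U$. Uniqueness of the coefficients together with $H$-equivariance then forces them to be right $H$-invariant, so they descend to regular functions on $U$.

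The shrinking to a basic open $U_f$ and the exterior-power remark are unnecessary but harmless. Your treatment of arbitrary opens $V\subset U$ and of linear independence spells out details the paper leaves implicit.
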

\begin{proof}
    Let $(b_j)_j$ be a $k[G]$-basis of $k[G]\otimes_k W$. Let us denote by $a$ the $k[G]$-valued matrix such that $\varphi_x^*\xi_i = \sum_j a_{ij}b_j$ for all $i$. Since $(\varphi_x^*\xi_i|_g)_i$ and $(b_j|_g)_j$ are both bases of $W$, they have the same cardinality and we can assume that $a$ is a square matrix.
    If $D = U_{\mathrm{det}(a)}$ then the matrix $a_{|D} = (a_{ij|D})_{i,j}$ is invertible, which means that $(\varphi_x^*\xi_{i|D})_i$ is a $k[D]$-basis of $k[D]\otimes_k W$. Now $D$ is right $H$-saturated, because of the $H$-equivariance of the $\xi_i$. Thus, condition \textit{\ref{2}} is satisfied for $U = \varphi_x^{-1}(D)$. For condition \textit{\ref{1}}, just notice that if $\psi \in I_x(W)$ and $(\varphi_x^*\psi)_{|D} = \sum_i f_i (\varphi_x^*\xi_i)_{|D}$ with $f_i \in k[D]$, then the $H$-equivariance of the $\xi_i$ implies that $f_i$ is right $H$-invariant.
\end{proof}

Thanks to Theorem \ref{surjectivitye1}, a family $(\xi_i)_i$ as in the above lemma always exists for $g=1$. By translating this family by the action of $G$, we immediately see that the same holds for every $g \in G$. Thus we obtain the following.

\begin{proposition}\label{ponctuel}
    The $\mathscr{O}_X$-module $\mathrm{Sh}\,I_x(W)$ is locally free. Moreover, for any $g \in G$, the composition
\begin{equation}
    I_x(W) \xrightarrow{\varphi_x^*} k[G] \otimes_k W \xrightarrow{|_{g}} W
\end{equation}
identifies with the evaluation of $I_x(W)$ at $gx$.
\end{proposition}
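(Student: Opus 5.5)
Everything rests on the preceding lemma (which produces a local basis around any point $g$) and on Theorem \ref{surjectivitye1}. First we check that the hypothesis of that lemma can be met at every $g\in G$. Since $X=Gx\cong G/H$ is affine, Theorem \ref{surjectivitye1} says that $H$ is observable. So the map $e_1\otimes\mathrm{id}_W : I_x(W)\to W$ is surjective, and we can choose $(\xi_i)_i$ in $I_x(W)$ with $(\varphi_x^*\xi_i|_1)_i$ a basis of $W$.

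Next we move this family to an arbitrary $g\in G$. Let $\lambda$ denote the left-translation action on $k[G]\otimes_k W$, which preserves $I_x(W)$. We have $(\lambda_g w)|_{g}=w|_{1}$, up to the convention $g$ versus $g^{-1}$. Hence the translated family $(\lambda_g\xi_i)_i$ satisfies the hypothesis at $g$. The lemma then gives an open set $U\ni gx$ on which $\mathrm{Sh}\,I_x(W)$ is free. Since $\varphi_x$ is surjective, these open sets cover $X$, so the sheaf is locally free. Note that we never need Lemma \ref{free} here: the lemma already gives the bases directly.

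For the second statement, fix $g$ and write $\epsilon_g=(|_g)\circ\varphi_x^*$. This map is $k$-linear. If $f\in k[X]$, then $\varphi_x^*f$ evaluated at $g$ equals $f(gx)$, so $\epsilon_g(f\psi)=f(gx)\,\epsilon_g(\psi)$. Hence $\epsilon_g$ vanishes on $\mathfrak m_{gx}I_x(W)$ and factors through a map $\bar\epsilon_g : I_x(W)|_{gx}\to W$. We claim $\bar\epsilon_g$ is an isomorphism.

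Take $U$ and $(\xi_i)$ as above. By (i), the stalk of $\mathrm{Sh}\,I_x(W)$ at $gx$ is free on the germs of the $\xi_i$. Evaluation of a module at a point only depends on the stalk, since $M|_{y}=M_{\mathfrak m_y}/\mathfrak m_yM_{\mathfrak m_y}$. So $(\xi_i|_{gx})_i$ is a basis of $I_x(W)|_{gx}$. By (ii), its image $(\varphi_x^*\xi_i|_g)_i$ under $\bar\epsilon_g$ is a basis of $W$. Therefore $\bar\epsilon_g$ sends a basis to a basis and is an isomorphism.

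The main delicate point is this identification of $M|_y$ with the fiber of the stalk. One needs $k[X]/\mathfrak m_y\otimes M \cong k[X]_{\mathfrak m_y}/\mathfrak m_y\otimes M_{\mathfrak m_y}$, which is standard, together with the passage from a basis over $k[U]$ to a basis of the stalk. A second point to keep in mind is that the identification is $H$-equivariant up to the twist by $g$. This is not needed for the statement itself, but it will matter when the proposition is applied later.
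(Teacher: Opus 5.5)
Your proposal is correct and follows the paper's argument. Observability of $H$ (Theorem \ref{surjectivitye1}, applicable because $X\cong G/H$ is affine) gives the family at $g=1$, left translation moves it to an arbitrary $g$, and the preceding lemma then gives local freeness. Your stalk-and-fiber argument for the second statement spells out what the paper leaves implicit: $(\xi_i|_{gx})_i$ is a basis of $I_x(W)|_{gx}$, and $|_g\circ\varphi_x^*$ sends it onto the basis $(\varphi_x^*\xi_i|_g)_i$ of $W$.
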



This already shows that $\mathrm{ev}_x \circ I_x$ is isomorphic to the identity functor when restricted to finite dimensional representations of $H$. The last lemma we need is the following.

\begin{lemma} \label{pointwise}
Let $M$ be a finitely generated representation of $(G,X)$. Then $\mathrm{Sh} M$ is locally free, the map $M \to \prod_{y \in X} M|_y$ defined by $m \mapsto (m|_y)_{y \in X}$ is injective and if $M_x = 0$ then $M = 0$.
\end{lemma}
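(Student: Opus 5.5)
The idea is to use Lemma \ref{free} together with the transitivity of the action. Since $M$ is finitely generated over $k[X]$, each fibre $M|_y$ is finite dimensional. For $g \in G$, the map $\rho(g) : M \to M$ is bijective and $\tau_g$-semilinear by the covariance relation of Definition \ref{module}. It therefore sends $y M$ onto $(gy)M$, where $yM$ denotes the product of $M$ with the maximal ideal of functions vanishing at $y$. Indeed, $\rho(g)(fm) = (\tau_g f)\rho(g)m$, and $\tau_g$ maps the ideal of $y$ onto the ideal of $gy$ (with the convention for $\tau$ matching the side of the action). Hence $\rho(g)$ induces a linear isomorphism $M|_y \cong M|_{gy}$. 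Since $G$ acts transitively, $y \mapsto \dim_k M|_y$ is constant on $X$, and Lemma \ref{free} gives that $\mathrm{Sh}M$ is locally free.

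For injectivity of $m \mapsto (m|_y)_y$, I will cover $X$ by finitely many open sets $U_{f}$ on which $\mathrm{Sh} M$ is free with a basis $(m_i)$, as in the proof of Lemma \ref{free}. Suppose $m|_y = 0$ for all $y$. On such a $U_f$, write the image of $m$ in $M_f = \mathrm{Sh}M(U_f)$ as $\sum_i f_i m_i$ with $f_i \in k[U_f]$. Then for every $y \in U_f$ we get $\sum_i f_i(y)\, m_i|_y = 0$. Since $(m_i|_y)_i$ is a basis of $M|_y$, each $f_i$ vanishes at every point of $U_f$. Because $U_f$ is a variety (hence reduced), this forces $f_i = 0$, so $m$ vanishes in $M_f$ for each $f$ in the cover. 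As $\mathrm{Sh}M$ is a sheaf with $\mathrm{Sh}M(X) = M$, it follows that $m = 0$.

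For the last claim, suppose $M|_x = 0$. By the isomorphisms $M|_x \cong M|_{gx}$ from the first step and transitivity, $M|_y = 0$ for all $y$, and injectivity then gives $M = 0$. (Alternatively, the constant rank is zero, so the locally free sheaf $\mathrm{Sh}M$ is zero.)

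The only delicate point is the first step: checking that $\rho(g)$ really maps $yM$ onto $(gy)M$, which needs the convention for $\tau$ and for the side of the action to be kept straight. Beyond that, I would note that finite generation is needed only for Lemma \ref{free}, and that reducedness of $k[U_f]$ is what makes vanishing at all points imply vanishing.
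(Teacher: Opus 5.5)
Your proof is correct and follows essentially the same route as the paper. In both, $\rho(g)$ transports $M|_y$ isomorphically onto $M|_{gy}$, so transitivity makes the fibre dimension constant and Lemma~\ref{free} gives local freeness; pointwise vanishing then forces local, hence global, vanishing, and $M|_x=0$ kills every fibre and hence $M$. You simply spell out details the paper leaves implicit: why $\rho(g)$ maps $yM$ onto $(gy)M$, and why reducedness of $k[U_f]$ forces the coefficients $f_i$ to vanish.
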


\begin{proof}
    The action of any $g \in G$ on $M$ induces a $k$-linear isomorphism $M|_x \to M |_{gx}$. Since $G$ acts transitively on $X$, the dimension of $M|_x$ is therefore independent of~$x$. Then, Lemma \ref{free} implies that $\mathrm{Sh} M$ is locally free.
    
    If $m \in M$ is such that $m|_y = 0$ for all $y\in X$, then $m$ vanishes locally because of the local freeness of $\mathrm{Sh} M$, hence $m =0$. Thus $M \to \prod_{y \in X} M|_y$ is injective. In particular, since the dimension of $M|_y$ is independent of $y$, the condition $M|_x = 0$ implies that $\prod_{y \in X} M|_y = 0$ and $M = 0$.
\end{proof}

\begin{proof}[Proof of Theorem \ref{equivalence}]
    Let us first show that $\mathrm{ev}_x \circ I_x$ is isomorphic to the identity functor. Let $W$ be any representation of $G$. Let us write $W = \varinjlim W_i$, with $(W_i)_i$ the directed system of all finitely generated subrepresentations of $W$. Since the induction commutes with strict inductive limits, we have $I_x(W) = \varinjlim I_x(W_i)$. We showed above that $\mathrm{ev}_x \circ I_x$ is isomorphic to the identity functor when restricted to finite dimensional representations of $H$ (see Proposition \ref{ponctuel}). Since $\mathrm{ev}_x$ commutes with colimits, it follows that $I_x(W)|_x = \varinjlim I_x(W_i)|_x = \varinjlim W_i = W$.
    
    Now, let us prove that $I_x \circ \mathrm{ev}_x$ is isomorphic to the identity functor. Let $M$ be a representation of $(G,X)$. Given that $H$ fixes $x M$, there exists a unique representation of $H$ on $M|_x$ such that the evaluation map $M \mapsto M|_x$ is $H$-equivariant. By the universal property of induction, the latter induces a $G$-equivariant map $\iota_M : M \to I^G(M|_x) = I_x(M|_x)$. Using that $M \mapsto M|_x$ is $k[X]$-linear, with $M|_x$ considered as a $k[X]$ module through evaluation at $x$, and the $G$-equivariance, one can easily check that $\iota_M$ is also $k[X]$-linear. Our goal is to prove that $\iota_M$ is an isomorphism.
    
    Let us assume first that $M$ is finitely generated. Let $m$ be an element of $M$ such that $\iota_M(m) = 0$. Almost by definition, we then have $(gm)|_x = 0$, hence $m|_{g^{-1}x} = 0$, for all $g \in G$. From the injectivity statement of Lemma \ref{pointwise}, we deduce that $m = 0$. This proves the injectivity of $\iota_M$. On the other hand, the $k$-linear map $M|_x \to I_x(M|_x)|_x = M|_x$ induced by $\iota_M$ is just the identity, so
    $\{ I_x(M|_x) / M \}|_x = 0$
    by right exactness of $\mathrm{ev}_x$. The last statement of Lemma \ref{pointwise} then implies that $\iota_M$ is surjective.
    
    To conclude the proof, let us check that $\iota_M$ is an isomorphism for arbitrary $M$. We can express $M$ as the direct limit $\varinjlim M_i$ of its finitely generated subrepresentations $M_i$. Since $I_x$ and $\mathrm{ev}_x$ are inverse functors for finitely generated objects, $\mathrm{ev}_x$ preserves kernels of morphisms between finitely generated covariant representations of $(G,X)$. In particular, the directed system $(M_i|_x \to M_j|_x : i \leq j)$ is made of \textit{injective} morphisms. Since $I_x$ commutes with strict direct limits, we have
    $$I_x\left(\varinjlim M_i|_x\right) = \varinjlim I_x(M_i|_x).$$
    The latter identifies with $M = \varinjlim M_i$ through the isomorphisms $\iota_{M_i}$. Since $\mathrm{ev}_x$ commutes with colimits, we have $M|_x = \varinjlim M_i|_x$. In the end, we get an identification of $I_x(M|_x)$ with $M$ through $\iota_M$. \end{proof}
    
    \begin{proof}[Proof of Theorem \ref{classification} in the transitive case]
    
    The first two statements of the classification follow from Theorem \ref{equivalence}. It remains to check the last one.
    Let $(x,V)$ and $(x',V')$ be two induction data. Let us first assume that they are equivalent, so there exists $g \in G$ such that $x' = gx$ and a linear isomorphism $T : V\to V'$ satisfying (\ref{twist}). If $\rho$ denotes the action of $G$ on $k[G]$ by right translation, then the map
    $$\begin{array}{cclll}
        k[G] \otimes_k V & \longrightarrow & k[G] \otimes_kV' \\
        f \otimes v & \longmapsto & \rho_gf \otimes T(v) 
    \end{array}$$
    induces a $(G, X)$-isomorphism $I_x(V) \to I_{x'}(V')$.
    
    Conversely, let us assume that $I_{x'}(V')$ and $I_{x}(V)$ are isomorphic. From the foregoing, we can assume that $x = x'$ without loss of generality. Then $V$ is isomorphic to $V'$ by Theorem \ref{equivalence}.
\end{proof}

\section{Application to motion groups}

Our goal now is to apply the classification theorem of the previous section to the infinite-dimensional representation theory of \textit{motion groups}, that is, semi-direct products of Lie groups of the form $K \ltimes \mathfrak{p}$, with $\mathfrak{p}$ a real vector space and $K$ a compact Lie group acting linearly on $\mathfrak{p}$ by conjugation. For that purpose, we first recall the correspondence between compact Lie groups and complex reductive groups, as well as general aspects of the representation theory of motion groups.

\subsection{Compact Lie groups and complex reductive groups}
In this section and even more in the following one, we will frequently use implicitly the correspondence between compact Lie groups and reductive complex groups. Let us recall the main properties of that correspondence and refer to \cite{OV}*{Ch. 5} for more information. Let $G$ be a complex reductive group. A compact (with respect to the analytic topology) subgroup $U$ of $G$ is maximal among all compact subgroups of $G$ if and only if it is Zariski dense in $G$. We refer to such $U$ as a \textit{maximal compact subgroup} of $G$. If $\mathfrak{g}$ and $\mathfrak{u}$ denote the Lie algebras of $G$ and $U$ respectively, we have $\mathfrak{g} = \mathfrak{u} \oplus i \mathfrak{u}$. Moreover, the map $U \times \mathfrak{u} \to G, (u,X) \mapsto ue^{iX}$ is bijective ; we call it the \textit{Cartan decomposition} of $G$ with respect to $U$. There is a unique involutive anti-holomorphic automorphism $\tau$ of $G$ which fixes $U$, defined by
$$\tau(ue^{iX}) = ue^{-iX} \qquad (u \in U, X \in \mathfrak{u}).$$
The pair $(U,\tau)$ is said to be a \textit{compact real form} of $G$. If $K$ is closed subgroup of $U$, then its Zariski closure $\overline{K}$ is reductive and $\tau$-stable, with compact real form $(K,\tau_{|\overline{K}})$. Conversely, if $H$ is a $\tau$-stable Zariski closed subgroup of $G$, then it is reductive with compact real form $(H\cap U, \tau_{|H})$.

\subsection{General representation theory of motion groups} Let $G_0 = K\ltimes \mathfrak{p}$ be a motion group. We adopt the same terminology as in \cite{Warner}*{§4.2} and refer to it for more details on what follows.

A continuous representation $\pi$ of $G_0$ on a Banach space $E$ is said to be \textit{topologically completely irreducible} if the linear span of $\{\pi(g) : g \in G_0\}$ is dense in the space of bounded operator on $E$ with respect to the strong operator topology. A topologically completely irreducible representation of $G_0$ is necessarily topologically irreducible. For unitary representations on Hilbert spaces, both notions coincide. A continuous Banach representation of $G_0$ is said to be \textit{admissible} if its restriction to $K$ has finite multiplicity with respect to any irreducible representation of $K$, see for example \cite{Baldoni}*{Definition 8}. By a result of Godement \cite{Warner}*{Theorem 4.5.2.1}, a topologically completely irreducible representations of $G_0$ is necessarily admissible. In view of \cite{Warner}*{Proposition 4.2.1.5}, a continuous representation of $G_0$ on a Banach space is topologically completely irreducible if and only if it is admissible and topologically irreducible. For simplicity, such representations of $G_0$ will be called \textit{irreducible admissible}. Moreover, two irreducible admissible representations $(\pi, E_\pi)$ and $(\rho, E_\rho)$ of $G_0$ are said to be \textit{Naimark equivalent} if there exists an injective densely defined operator $T : E_\pi \to E_\rho$ whose graph is a closed subrepresentation of $(\pi\!\times\!\rho, E_\pi\! \times \!E_\rho)$. This is an equivalence relation, see \cite{Warner}*{Theorem 4.2.1.6}, which coincides with the unitary equivalence for irreducible unitary representations of $G_0$.

For many aspects, the study of admissible representations of $G_0$ reduces to that of $(\mathfrak{g}_0, K)$-modules (for a definition, see \cite{Baldoni}*{Definition 7}). The following standard facts may be found in \cite{Warner}*{§4.5.5}. Let $E$ be an admissible representation of $G_0$ and let us denote by $E_K$ the subspace of $E$ consisting of $K$-finite vectors. The elements of $E_K$ are smooth with respect to the action of $G_0$, therefore $E_K$ inherits a natural $(\mathfrak{g}_0, K)$-module structure. Moreover, $E$ is topologically irreducible if and only if $E_K$ is a simple $(\mathfrak{g}_0,K)$-module. In that case, if $E'$ is another irreducible admissible representation of $G_0$, then $E$ and $E'$ are Naimark equivalent if and only if $E_K$ and $E_K'$ are isomorphic as $(\mathfrak{g}_0, K)$-modules.

\subsection{Irreducible admissible representations of general motion groups} Let $K_\mathbb{C}$ be the complexification of $K$. It is an affine reductive group over $\mathbb{C}$. The action of $K$ on $\mathfrak{p}$ induces by complexification a representation of $K_\mathbb{C}$ on $\mathfrak{p}_\mathbb{C} = \mathfrak{p}\otimes_\mathbb{R}\mathbb{C}$. More generally, every continuous locally finite representation of $K$ on a complex vector space extends uniquely to a representation of $K_\mathbb{C}$ on the same vector space. Thus, any $(\mathfrak{g}_0,K)$-module $E$ inherits a representation of $K_\mathbb{C}$; moreover the action of $\mathfrak{p}$ induces on $E$ a structure of module over the symmetric algebra of $\mathfrak{p}_\mathbb{C}$. The latter identifies with the algebra of regular functions on the affine variety $\mathfrak{p}_\mathbb{C}^*$, hence any $(\mathfrak{g}_0,K)$-module can be endowed with a canonical covariant representation of $(K_\mathbb{C}, \mathfrak{p}^*_\mathbb{C})$, where the action of $K_\mathbb{C}$ on $\mathfrak{p}^*_\mathbb{C}$ is the dual to that on $\mathfrak{p}_\mathbb{C}$. This establishes a category equivalence between $\mathrm{Rep}(K_\mathbb{C}, \mathfrak{p}^*_\mathbb{C})$ and the category of $(\mathfrak{g}_0,K)$-modules. Theorem \ref{classification} therefore describes the equivalence classes of simple $(\mathfrak{g}_0,K)$-modules.

Let us now explain how we can deduce a classification of the irreducible admissible representations of $G_0$. We define a \textit{concrete datum} as a pair $(\lambda, \pi)$ with $\lambda$ an element of $\mathfrak{p}^*_\mathbb{C}$ which generates a closed $K_\mathbb{C}$-orbit and such that $K^\lambda$ is a maximal compact subgroup of $K_\mathbb{C}^\lambda$ (which is reductive by Matsushima's criterion), and $\pi$ an irreducible unitary representation of $K^\lambda$. Any concrete datum $(\lambda, \pi)$ defines an induction datum $(\lambda, \tilde{\pi})$ associated to $(K_\mathbb{C}, \mathfrak{p}^*_\mathbb{C})$, where $\tilde \pi$ is the unique extension of $\pi$ to a representation of $K_\mathbb{C}^\lambda$ on the same vector space. Let us call equivalent any two concrete data whose associated induction data are equivalent. 

Let us fix an real induction datum $(\lambda, \pi)$ and let us denote by $V$ the underlying vector space of $\pi$. Let $\mathcal{H}(\lambda,\pi)$ be the Hilbert space of square-integrable functions $f : K \to V$ such that
$$\pi(\gamma) f(k \gamma) = f(k) \qquad (k \in K, \gamma \in K^\lambda).$$
We endow it with the left regular action of $K$ and the linear action of $\mathfrak{p}$ defined by
$$(x \cdot f)(k) = e^{\langle k\lambda, x\rangle}f(k) \qquad (x \in \mathfrak{p}, f \in \mathcal{H}(\lambda,\pi), k \in K).$$
The combination of these two actions yields a continuous representation of $G_0$ on the Hilbert space $\mathcal{H}(\lambda,\pi)$.

\begin{theorem}\label{motiongroup}
For any concrete datum $(\lambda,\pi)$, the representation $\mathcal{H}(\lambda,\pi)$ is irreducible admissible. Moreover, any irreducible admissible representation of $G_0$ is of that form. Finally, two representation $\mathcal{H}(\lambda,\pi)$ and $\mathcal{H}(\lambda',\pi')$ are Naimark equivalent if and only if $(\lambda,\pi)$ and $(\lambda',\pi')$ are equivalent concrete data.
\end{theorem}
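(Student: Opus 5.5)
Everything will be reduced to $(\mathfrak{g}_0,K)$-modules, where Theorem \ref{classification} applies through the equivalence between $(\mathfrak{g}_0,K)$-modules and $\mathrm{Rep}(K_\mathbb{C},\mathfrak{p}^*_\mathbb{C})$. The standard facts recalled from \cite{Warner}*{§4.5.5} do the rest. An admissible $E$ is topologically irreducible iff $E_K$ is simple, and two irreducible admissible representations are Naimark equivalent iff their $K$-finite parts are isomorphic. Also, every simple $(\mathfrak{g}_0,K)$-module is expected to be the $K$-finite part of some admissible Banach representation; this is what our construction will show directly. The proof will rest on one identification: for a concrete datum $(\lambda,\pi)$,
$$\mathcal{H}(\lambda,\pi)_K \cong I_\lambda(\tilde\pi)$$
as covariant representations of $(K_\mathbb{C},\mathfrak{p}^*_\mathbb{C})$.

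\textbf{Step 1 (the identification).} Since $K$ is Zariski dense in $K_\mathbb{C}$, restriction gives an isomorphism of $\mathbb{C}[K_\mathbb{C}]$ onto the algebra of $K$-finite continuous functions on $K$ (matrix coefficients). Tensoring with $V$ and taking invariants, $I_\lambda(\tilde\pi)=(\mathbb{C}[K_\mathbb{C}]\otimes V)^{K_\mathbb{C}^\lambda}$ maps into the $K$-finite vectors of $L^2(K,V)$ satisfying the equivariance under $K^\lambda$. Conversely, every $K$-finite $f$ in $\mathcal{H}(\lambda,\pi)$ is a $V$-valued matrix coefficient, so it extends to a regular function on $K_\mathbb{C}$. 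Its $K_\mathbb{C}^\lambda$-equivariance follows from the $K^\lambda$-equivariance, because $K^\lambda$ is Zariski dense in $K^\lambda_\mathbb{C}$; this is exactly where the hypothesis that $K^\lambda$ is a maximal compact subgroup of $K^\lambda_\mathbb{C}$ is used. The map is $K$-equivariant. On the $\mathfrak{p}$-side, differentiating the action gives $(x\cdot f)(k)=\langle k\lambda,x\rangle f(k)$ (I read the displayed formula as the infinitesimal $\mathfrak{p}$-action). This is precisely multiplication by $\varphi_\lambda^*(x)$ for $x\in\mathfrak{p}\subset S(\mathfrak{p}_\mathbb{C})=\mathbb{C}[\mathfrak{p}^*_\mathbb{C}]$, so the covariant structures match.

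\textbf{Step 2 (existence and irreducibility).} $I_\lambda(\tilde\pi)$ has finite $K$-multiplicities: by Frobenius reciprocity (the universal property of induction) the multiplicity of $\sigma$ is $\dim\mathrm{Hom}_{K^\lambda}(\sigma,\pi)<\infty$, and the same count holds on $L^2$ by Peter--Weyl. Hence $\mathcal{H}(\lambda,\pi)$ is admissible. Its $K$-finite part is $I_\lambda(\tilde\pi)$, which is irreducible by Theorem \ref{classification}, so $\mathcal{H}(\lambda,\pi)$ is topologically irreducible and therefore irreducible admissible. Continuity of the $G_0$-action on $L^2$ should be routine, but boundedness of $e^{\langle k\lambda,x\rangle}$ needs $\lambda$ to pair to bounded values, which holds since $K$ is compact.

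\textbf{Step 3 (exhaustion and equivalence).} Let $E$ be irreducible admissible. Then $E_K$ is simple, so by Theorem \ref{classification} we have $E_K\cong I_\lambda(V)$ for some induction datum $(\lambda,V)$. Since $K_\mathbb{C}\lambda$ is closed, Matsushima's criterion makes $K^\lambda_\mathbb{C}$ reductive. It is therefore the complexification of one of its maximal compact subgroups. \emph{This is the main obstacle:} we must conjugate $\lambda$ within its orbit so that the maximal compact subgroup can be taken to be $K^\lambda=K\cap K^\lambda_\mathbb{C}$. The plan is a Kempf--Ness type argument: minimize a $K$-invariant Hermitian norm on the closed orbit $K_\mathbb{C}\lambda$. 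At a minimal vector the stabilizer is $\tau$-stable (the Kempf--Ness/Richardson--Slodowy result), and then by §3.1 its compact real form is $K_\mathbb{C}^\lambda\cap K=K^\lambda$. Replacing the datum by this equivalent one, $V$ restricts to an irreducible representation $\pi$ of $K^\lambda$, which is unitarizable because $K^\lambda$ is compact. Thus $(\lambda,\pi)$ is a concrete datum with $E_K\cong\mathcal{H}(\lambda,\pi)_K$, so $E$ is Naimark equivalent to $\mathcal{H}(\lambda,\pi)$; I take ``of that form'' to mean up to Naimark equivalence. Finally, $\mathcal{H}(\lambda,\pi)$ and $\mathcal{H}(\lambda',\pi')$ are Naimark equivalent iff $I_\lambda(\tilde\pi)\cong I_{\lambda'}(\tilde\pi')$. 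By the last part of Theorem \ref{classification}, this holds iff the induction data are equivalent, which is the definition of equivalent concrete data.
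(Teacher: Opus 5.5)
Your proposal is correct. Step 1 (the identification $\mathcal{H}(\lambda,\pi)_K\cong I_\lambda(\tilde\pi)$ via representative functions and Zariski density of $K^\lambda$), Step 2, and the final equivalence argument are essentially the paper's proof. The paper gets admissibility directly from $\mathcal{H}(\lambda,\pi)|_K\subset L^2(K)\otimes V$ and Peter--Weyl rather than through Frobenius reciprocity, but this difference is cosmetic.

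The genuine difference is in Step 3, where you move $\lambda$ inside its closed orbit so that $K^\lambda$ becomes a maximal compact subgroup of $K^\lambda_\mathbb{C}$. The paper argues group-theoretically. A maximal compact subgroup $C$ of $K^\lambda_\mathbb{C}$ lies in some maximal compact subgroup of $K_\mathbb{C}$, which is $k^{-1}Kk$ for some $k$ by conjugacy. Hence $k^{-1}Kk\cap K^\lambda_\mathbb{C}=C$, and $K^{k\lambda}$ is maximal compact in $K^{k\lambda}_\mathbb{C}$. You instead take a vector of minimal norm in $K_\mathbb{C}\lambda$ and use Kempf--Ness/Richardson--Slodowy to get a $\tau$-stable stabilizer.

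The paper's route is lighter: it needs only Cartan--Iwasawa--Malcev and the reductivity of $K^\lambda_\mathbb{C}$, with no invariant Hermitian norm or moment map. Your route needs more machinery. Still, the fact you use has a short convexity proof: if $k e^{iY}$ fixes a minimal vector $v$, then $t\mapsto\|e^{itY}v\|^2$ is a sum of real exponentials attaining its minimum at $t=0$ and $t=1$, which forces $Yv=0$.

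In return, your route gives a canonical representative, unique up to $K$: a zero of the moment map in the closed orbit. This pays off later in the paper. In the Cartan motion group setting, every $x\in\mathfrak{a}_\mathbb{C}$ satisfies $[x,\bar x]=0$, so it is such a zero. The same argument then gives Proposition \ref{complexstab} directly, without the Kostant--Rallis analysis of connected components.
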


\begin{proof} 
    Let $(\lambda,\pi)$ be a concrete datum and let $V$ be the underlying vector space of $\pi$. The restriction of $\mathcal{H}(\lambda,\pi)$ to $K$ is a subrepresentation of $L^2(K) \otimes_\mathbb{C} V$, where $K$ acts trivially on the second factor. Hence, from the Peter-Weyl theorem, it follows that $\mathcal{H}(\lambda,\pi)$ is admissible. Let $(\lambda,\tilde\pi)$ be the induction datum associated to $(\lambda,\pi)$. The restriction to $K$ defines an isomorphism of $\mathbb{C}[K_\mathbb{C}]$ onto the algebra of continuous $K$-finite functions on $K$, which in turn induces, after tensoring by $V$, an linear embedding $I_\lambda(\tilde \pi) \to \mathcal{H}(\lambda,\pi)$. Using that $K^\lambda$ is a maximal compact subgroup of $K_\mathbb{C}^\lambda$, one can check that the image of this embedding is precisely the space $\mathcal{H}(\lambda,\pi)_K$ of $K$-finite vectors of $\mathcal{H}(\lambda,\pi)$. If we consider $I_\lambda(\tilde \pi)$ as a $(\mathfrak{g}_0,K)$-module through the category equivalence mentioned at the top of the subsection, then this embedding clearly defines an isomorphism of $(\mathfrak{g}_0,K)$-modules $I_\lambda(\tilde \pi) \to \mathcal{H}(\lambda,\pi)_K$. Since $I_\lambda(\tilde \pi)$ is simple, we conclude that $\mathcal{H}(\lambda,\pi)$ is topologically irreducible.
    
    Let $E$ be an irreducible admissible representation of $G_0$. The associated $(\mathfrak{g}_0,K)$-module $E_K$ of $K$-finite vectors is simple. It is thus equivalent to $I_\lambda(V)$ for some induction datum $(\lambda, V)$ of the pair $(K_\mathbb{C}, \mathfrak{p}^*_\mathbb{C})$. Given that every maximal compact subgroup of $K^\lambda_\mathbb{C}$ can be extended to a maximal compact subgroup of $K_\mathbb{C}$ and that the maximal compact subgroups of $K_\mathbb{C}$ are all conjugated, there exists $k \in K_\mathbb{C}$ such that $k^{-1} K k \cap K_\mathbb{C}^\lambda$ is a maximal compact subgroup of $K^\lambda_\mathbb{C}$. Then $K^{k\lambda}$ is a maximal compact subgroup of $K^{k\lambda}_\mathbb{C}$. Thus, by considering an equivalent induction data if necessary, we can assume without loss of generality that $K^\lambda$ is a maximal compact subgroup of $K_\mathbb{C}^\lambda$. If $\pi$ denotes the restriction of $V$ to $K$, then the $(\mathfrak{g}_0, K)$-module $\mathcal{H}(\lambda,\pi)_K$ is isomorphic to $I_\lambda(V)$. It follows that $\mathcal{H}(\lambda,\pi)$ and $E$ are Naimark equivalent.
    
    The last statement is also a straighforward consequence of the fact that two irreducible admissible representation of $G_0$ are Naimark equivalent if and only if their associated $(\mathfrak{g}_0,K)$-modules are isomorphic.
\end{proof}

\begin{remark}
    The above proof shows two non-trivial facts. First, the irreducible admissible representations of $G_0$ on Hilbert spaces exhaust its admissible dual. Second, every simple $(\mathfrak{g}_0,K)$-module can be integrated to an irreducible admissible representation of $G_0$.
\end{remark}

\subsection{The case of Cartan motion groups}
From now, let us focus on the case where $G_0$ is the motion group of a real reductive group $G$ with Cartan decomposition $Ke^{\mathfrak{p}}$. When $G$ is semisimple, the irreducible admissible representations of $G_0$ have already been classified explicitly \cites{DelormeCh, Rader}. Here we prove that the same classification holds for $G$ reductive in the sense of \cite{Knapp}*{Chapter {\rm VII}, §2}. Our approach is new and much simpler: it relies on the description of closed orbits, properties of stabilizers and Theorem \ref{motiongroup}.

Since $\mathfrak{p}_\mathbb{C}$ is self-dual as a representation of $K_\mathbb{C}$, we may equally consider $(\mathfrak{g}_0,K)$-modules as covariant representations of $(K_\mathbb{C}, \mathfrak{p}_\mathbb{C})$, where $K_\mathbb{C}$ acts on $\mathfrak{p}_\mathbb{C}$ through the adjoint representation. Let $\mathfrak{a} \subset \mathfrak{p}$ be a maximal abelian subspace. A version of Chevalley's restriction theorem \cite{Wallach}*{§3.1.2} asserts that the restriction morphism $\mathbb{C}[\mathfrak{p}_\mathbb{C}] \to \mathbb{C}[\mathfrak{a}_\mathbb{C}]$ induces a isomorphism of algebras
$$\mathbb{C}[\mathfrak{p}_\mathbb{C}]^{K_\mathbb{C}} \longrightarrow \mathbb{C}[\mathfrak{a}_\mathbb{C}]^W,$$
where $W$ denotes the restricted Weyl group related to $\mathfrak{a}$.
In other words, we have an isomorphism of affine varieties $W \backslash \mathfrak{a}_\mathbb{C} \to K_\mathbb{C} \dquo \mathfrak{p}_\mathbb{C}$. On the other hand, from \cite{KostantRallis}*{Theorem 1, Proposition 16}, every element of $\mathfrak{a}_\mathbb{C}$ generates a closed $K_\mathbb{C}$-orbit. Since every fiber of the canonical morphism $\mathfrak{p}_\mathbb{C} \to K_\mathbb{C} \dquo \mathfrak{p}_\mathbb{C}$ contains a unique closed orbit, we get the following.

\begin{proposition}
    The closed $K_\mathbb{C}$-orbits of $\mathfrak{p}_\mathbb{C}$ are exactly the orbits intersecting~$\mathfrak{a}_\mathbb{C}$. Moreover, two elements of $\mathfrak{a}_\mathbb{C}$ are in the same $K_\mathbb{C}$-orbit if and only if they are related by an element of $W$.
\end{proposition}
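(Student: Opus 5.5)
The plan is to combine the three facts stated just before the proposition: the Chevalley restriction isomorphism $W\backslash \mathfrak{a}_\mathbb{C} \cong K_\mathbb{C}\dquo \mathfrak{p}_\mathbb{C}$, the Kostant--Rallis result that every element of $\mathfrak{a}_\mathbb{C}$ generates a closed $K_\mathbb{C}$-orbit, and the general fact from Section~\ref{quotients} that each fiber of the quotient morphism $\pi : \mathfrak{p}_\mathbb{C} \to K_\mathbb{C}\dquo\mathfrak{p}_\mathbb{C}$ contains a unique closed orbit ($K_\mathbb{C}$ is reductive).

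\emph{First statement.} Every orbit meeting $\mathfrak{a}_\mathbb{C}$ is the orbit of an element of $\mathfrak{a}_\mathbb{C}$, so it is closed by Kostant--Rallis. Conversely, let $O$ be a closed orbit and let $\xi = \pi(O)$. The composite $\mathfrak{a}_\mathbb{C} \hookrightarrow \mathfrak{p}_\mathbb{C} \to K_\mathbb{C}\dquo\mathfrak{p}_\mathbb{C}$ corresponds on coordinate rings to the restriction map $\mathbb{C}[\mathfrak{p}_\mathbb{C}]^{K_\mathbb{C}} \to \mathbb{C}[\mathfrak{a}_\mathbb{C}]$, which factors as the Chevalley isomorphism followed by the inclusion $\mathbb{C}[\mathfrak{a}_\mathbb{C}]^W \subset \mathbb{C}[\mathfrak{a}_\mathbb{C}]$. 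The morphism $\mathfrak{a}_\mathbb{C} \to W\backslash\mathfrak{a}_\mathbb{C}$ is surjective because $W$ is finite (Section~\ref{quotients}). Hence the composite is surjective, and there is $H \in \mathfrak{a}_\mathbb{C}$ with $\pi(H) = \xi$. Then $K_\mathbb{C}H$ and $O$ are both closed orbits in the fiber $\pi^{-1}(\xi)$, so by uniqueness $O = K_\mathbb{C}H$, and $O$ meets $\mathfrak{a}_\mathbb{C}$.

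\emph{Second statement.} If $H' = wH$ with $w \in W$, we use that $W = N_K(\mathfrak{a})/Z_K(\mathfrak{a})$. So $w$ is realised by some $k \in K \subset K_\mathbb{C}$, and $H' = \mathrm{Ad}(k)H$ lies in the same orbit. Conversely, suppose $H, H' \in \mathfrak{a}_\mathbb{C}$ lie in the same $K_\mathbb{C}$-orbit. Then every $K_\mathbb{C}$-invariant polynomial takes the same value on $H$ and $H'$. By the Chevalley isomorphism, every $W$-invariant polynomial on $\mathfrak{a}_\mathbb{C}$ therefore agrees at $H$ and $H'$. Since $W$ is finite, $W$-invariant polynomials separate $W$-orbits: $W\backslash\mathfrak{a}_\mathbb{C}$ is a geometric quotient, as recalled in Section~\ref{quotients}. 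Hence $H' \in WH$.

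The only step needing care is the identification of the composite $\mathfrak{a}_\mathbb{C} \to K_\mathbb{C}\dquo\mathfrak{p}_\mathbb{C}$ with the quotient map to $W\backslash\mathfrak{a}_\mathbb{C}$ followed by the isomorphism. This is immediate from the comorphisms, so I expect no real obstacle. I would also note that $W$ acts on $\mathfrak{a}_\mathbb{C}$ through elements of $K$, which justifies the "if" direction of the second statement.
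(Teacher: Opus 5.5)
Your proof is correct and is essentially the paper's argument: the paper deduces the proposition directly from the Chevalley isomorphism $W\backslash\mathfrak{a}_\mathbb{C}\cong K_\mathbb{C}\dquo\mathfrak{p}_\mathbb{C}$, the Kostant--Rallis result that orbits through $\mathfrak{a}_\mathbb{C}$ are closed, and the uniqueness of the closed orbit in each fiber of $\mathfrak{p}_\mathbb{C}\to K_\mathbb{C}\dquo\mathfrak{p}_\mathbb{C}$, and you simply spell these steps out. The only cosmetic difference is in the ``if'' direction of the second statement: you realise $W$ by elements of $K$, whereas it also follows from fiber-uniqueness, since $W$-related points of $\mathfrak{a}_\mathbb{C}$ have closed orbits lying in the same fiber.
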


Now, regarding the stabilizers, the following holds.

\begin{proposition} \label{complexstab}
If $x \in \mathfrak{a}_\mathbb{C}$ then $K^x$ is a maximal compact subgroup of $K_\mathbb{C}^x$.
\end{proposition}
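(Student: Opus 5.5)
We need to show that $K^x$ is a maximal compact subgroup of $K_\mathbb{C}^x$ for $x \in \mathfrak{a}_\mathbb{C}$. By the correspondence recalled in §3.1, it suffices to find a compact real form $(U,\tau)$ of $K_\mathbb{C}$ with $U = K$ such that $K_\mathbb{C}^x$ is $\tau$-stable. Then $K_\mathbb{C}^x$ is reductive with compact real form $(K_\mathbb{C}^x \cap K, \tau|_{K_\mathbb{C}^x})$, and $K_\mathbb{C}^x\cap K = K^x$ is therefore a maximal compact subgroup.

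The plan is to exploit complex conjugation. Let $\tau$ be the anti-holomorphic involution of $K_\mathbb{C}$ fixing $K$. Let $\sigma$ be the complex conjugation of $\mathfrak{p}_\mathbb{C}=\mathfrak{p}\oplus i\mathfrak{p}$ with respect to $\mathfrak{p}$. Since $K$ acts on $\mathfrak{p}$ by real operators, the map $k \mapsto \sigma\circ \mathrm{Ad}(k)\circ\sigma$ is an anti-holomorphic homomorphism $K_\mathbb{C}\to GL(\mathfrak{p}_\mathbb{C})$ that agrees with $\mathrm{Ad}$ on $K$. By Zariski density of $K$ (uniqueness of extensions / anti-holomorphic extension), we get the key identity
$$\mathrm{Ad}(\tau(g))\,\sigma(y) = \sigma(\mathrm{Ad}(g)y) \qquad (g\in K_\mathbb{C},\ y \in \mathfrak{p}_\mathbb{C}).$$
Consequently $\tau(K_\mathbb{C}^x) = K_\mathbb{C}^{\sigma(x)}$.

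Now write $x = a + ib$ with $a,b\in\mathfrak{a}$, so that $\sigma(x) = a - ib$. The main point is to show that $K_\mathbb{C}^x = K_\mathbb{C}^{a}\cap K_\mathbb{C}^{b} = K_\mathbb{C}^{\sigma(x)}$, which gives $\tau$-stability. The inclusion $\supseteq$ is clear. For $\subseteq$, I would use the restricted root decomposition. Since $\mathfrak{a}$ is abelian, the elements $\mathrm{ad}(a)$ and $\mathrm{ad}(b)$ are commuting semisimple operators with real eigenvalues, so $\mathrm{ad}(x)$ is diagonalizable with eigenvalues $\alpha(a)+i\alpha(b)$. Hence $\ker\mathrm{ad}(x) = \ker\mathrm{ad}(a)\cap\ker\mathrm{ad}(b)$ at the Lie algebra level, i.e. $\mathfrak{k}_\mathbb{C}^x = \mathfrak{k}_\mathbb{C}^a\cap\mathfrak{k}_\mathbb{C}^b$. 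For the groups I expect this to be the main obstacle, since $K_\mathbb{C}^x$ may be disconnected. I would handle it as follows. The element $\mathrm{Ad}(g)$ for $g\in K_\mathbb{C}^x$ commutes with $\mathrm{ad}(x)$ on $\mathfrak{g}_\mathbb{C}$, so it preserves each eigenspace $\mathfrak{g}_{\mu}$ of $\mathrm{ad}(x)$, where $\mu$ ranges over values $\alpha(x)$. Choose a real polynomial $P$ with $P(\alpha(a)+i\alpha(b))=\alpha(a)$ for all restricted roots $\alpha$ and $P(0)=0$; this is possible via Lagrange interpolation in $\mathbb{C}$ together with the conjugation-compatible choice, interpolating over the finite set of values. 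Then $\mathrm{ad}(a)=P(\mathrm{ad}(x))$ (using a complex polynomial suffices), so $\mathrm{Ad}(g)$ commutes with $\mathrm{ad}(a)$. It follows that $\mathrm{ad}(\mathrm{Ad}(g)a)=\mathrm{ad}(a)$, whence $\mathrm{Ad}(g)a - a$ is central in $\mathfrak{g}_\mathbb{C}$. Since $\mathrm{Ad}(g)$ fixes the center pointwise (because $K_\mathbb{C}$ acts trivially on $\mathfrak{z}$ by reductivity in the sense of Knapp), I will argue that $\mathrm{Ad}(g)a=a$. I plan to project onto the center using the $K_\mathbb{C}$-invariant decomposition $\mathfrak{g}_\mathbb{C} = \mathfrak{z}_\mathbb{C}\oplus[\mathfrak{g}_\mathbb{C},\mathfrak{g}_\mathbb{C}]$: the $[\,,]$-components agree, and the central components agree because the action is trivial there. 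Similarly $\mathrm{Ad}(g)b=b$. This gives the group-level equality.

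Combining the two steps, $K_\mathbb{C}^x$ is $\tau$-stable, and the last sentence of §3.1 yields that $K_\mathbb{C}^x\cap K = K^x$ is a maximal compact subgroup of $K_\mathbb{C}^x$.
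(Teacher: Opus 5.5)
Your argument is correct, and it takes a genuinely different route from the paper's.

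The paper splits the claim into two parts. First, it shows that the Lie algebra of $K_\mathbb{C}^x$ is the complexification of that of $K^x$. It does this via a root-vector expansion showing that $y\in\mathfrak{k}_\mathbb{C}$ centralizes $x$ iff its conjugate does, which is the infinitesimal form of your $K_\mathbb{C}^x=K_\mathbb{C}^{\bar x}$. Second, it shows that $K^x$ meets every connected component of $K_\mathbb{C}^x$. The second part is where the work lies. The paper imports Kostant--Rallis (their Lemmas 10 and 20) to show that the finite group $F$ of order-two elements of $\exp(\mathrm{ad}\,\mathfrak{a}_\mathbb{C})$ is full in the centralizer $L^x$, where $L=\mathrm{Int}(\mathfrak{g}_\mathbb{C})^\theta$. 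It then transports this back along $\delta:K_\mathbb{C}\to L$, using compatibility of Cartan decompositions and fullness of $K\cap\ker\delta$ in $\ker\delta$.

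You instead work at the group level from the outset. The eigenspaces of $\mathrm{ad}\,x$ are exactly the joint eigenspaces of the commuting, real-diagonalizable pair $(\mathrm{ad}\,a,\mathrm{ad}\,b)$, so anything commuting with $\mathrm{ad}\,x$ commutes with both. This yields $K_\mathbb{C}^x=K_\mathbb{C}^a\cap K_\mathbb{C}^b=\tau(K_\mathbb{C}^x)$. The standard polar-decomposition fact for $\tau$-stable Zariski-closed subgroups, recalled in \S3.1, then handles the Lie algebra and the component group in one stroke.

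Your route is considerably more elementary and self-contained: no $L$, no $\delta$, no component-group computations. The inner-type hypothesis only enters to dispose of the central component. Even there it could be avoided, since $K_\mathbb{C}$ acts on $\mathfrak{z}_\mathbb{C}$ through $K/K^\circ$ by real operators, so fixing the central part of $a+ib$ forces fixing the central parts of $a$ and $b$. What the paper's route provides in addition is structural information: $F$ controls the components of the stabilizer in $L$. That is not needed for the proposition itself.

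Two cosmetic points:
\begin{enumerate}
\item The real semisimplicity of $\mathrm{ad}\,a$ and $\mathrm{ad}\,b$ comes from $\mathfrak{a}\subset\mathfrak{p}$, not from $\mathfrak{a}$ being abelian; abelianness only gives that they commute.
\item The interpolation is well defined simply because $\alpha(a)=\mathrm{Re}\,\alpha(x)$ for every restricted root. A complex Lagrange polynomial therefore suffices, and the remark about a real, conjugation-compatible choice can be dropped.
\end{enumerate}
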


To give the proof, we need first to introduce some notation and vocabulary. A subgroup of a topological group $H$ is said to be \textit{full} if it meets every connected component of $H$. We denote by $\theta$ the complexified Cartan involution on $\mathfrak{g}_\mathbb{C}$ and write $\tau$ for the unique antilinear involutive automorphism of $\mathfrak{g}_\mathbb{C}$ such that $\tau_{|\mathfrak{g}} = \mathrm{id}_\mathfrak{g}$. If $H$ is any subgroup of the adjoint group $\mathrm{Int}(\mathfrak{g}_\mathbb{C})$ of $\mathfrak{g}_\mathbb{C}$ and $\sigma$ is any element of $\mathrm{GL}(\mathfrak{g}_\mathbb{C})$, then we write $H^\sigma = \{ h \in H : \sigma h = h \sigma\}$. We put $L = \mathrm{Int}(\mathfrak{g}_\mathbb{C})^\theta$, $U = \mathrm{Int}(\mathfrak{g}_\mathbb{C})^{\tau\theta}$ and $F = \{ a \in \exp[\mathrm{ad}_{\mathfrak{g}_\mathbb{C}} (\mathfrak{a}_\mathbb{C})] : a^2 = 1\}$. Since $G$ is of inner type, the adjoint action defines an open homomorphism of complex groups $\delta : K_\mathbb{C} \to L$. Finally, note that $\delta$ maps $K$ to $U$, the latter being a maximal compact subgroup of $\mathrm{Int}(\mathfrak{g}_\mathbb{C})$. This means that $\delta$ preserves Cartan decompositions: if $\sigma_K$ and $\sigma_U$ denote the anti-holomorphic involutive automorphisms of $K_\mathbb{C}$ and $\mathrm{Int}(\mathfrak{g}_\mathbb{C})$ associated to $K$ and $U$ respectively, then we have $\delta \circ \sigma_K = \sigma_U \circ \delta$.

\begin{proof}
An element $y \in \mathfrak{k}_\mathbb{C}$ commutes with $x$ if and only if $\tau(y)$ do so. One can easily prove this by expanding $y$ in root vectors, see the beginning of \cite{KostantRallis}*{§I.6}. It follows that the Lie algebra of $K_\mathbb{C}^x$ is the complexification of that of $K^x$. To prove the proposition, it is thus enough to show that $K^x$ is a full subgroup of $K_\mathbb{C}^x$.

To begin with, let us prove that $F$ is a full subgroup of the centralizer $L^x$ of $x$ in $L$. Let $\mathfrak{g}_\mathbb{C}^x$ be the centralizer of $x$ in $\mathfrak{g}_\mathbb{C}$ and let $Z_x$ be the centralizer of $\mathfrak{g}_\mathbb{C}^x$ in $L^x$. The proof of \cite{KostantRallis}*{lemma 10} shows that $F Z_x$ is a full subgroup of $L^x$. Moreover, $Z_x$ is contained in the centralizer $M$ of $a_\mathbb{C}$ in $L$, which contains $F$ as a full subgroup by \cite{KostantRallis}*{lemma 20}. Since $M \subset L^x$, it follows that $F$ is a full subgroup of $L^x$.

Let us now conclude. Using that $\delta : K_\mathbb{C} \to \mathrm{Int}(\mathfrak{g}_\mathbb{C})$ preserves Cartan decompositions, we obtain $\delta(K_\mathbb{C})^\tau = \delta(K)$ and $\ker \delta = K \exp(i \mathfrak{k}\cap\ker \delta_*)$, where $\delta_*$ is the Lie algebra morphism induced by $\delta$. These two facts imply respectively that $\delta(K_\mathbb{C}^x)^\tau = \delta(K^x)$ and that $K \cap \ker \delta$ is a full subgroup of $\ker \delta$. From that fact that $\delta(K_\mathbb{C}^x)^\tau$ contains $F$, we get that $\delta(K^x)$ is a full subgroup of $\delta(K_\mathbb{C}^x)$. Since $K^x$ contains $K\cap \ker \delta$, it follows that $K^x$ is a full subgroup of $K_\mathbb{C}^x$.
\end{proof}

Let $\mathfrak{q}$ be the orthogonal complement of $\mathfrak{a}$ with respect to some $K$-invariant inner product on $\mathfrak{p}$. Let us identify $\mathfrak{a}_\mathbb{C}^*$ with the space of linear forms on $\mathfrak{p}_\mathbb{C}$ vanishing on $\mathfrak{q}_\mathbb{C}$. The above proposition shows that the stabilizer in $K_\mathbb{C}$ of an element of $\mathfrak{a}_\mathbb{C}^*$, with respect to the coadjoint action, coincides with the complexification of its stabilizer in $K$. Thus, for every $\lambda \in \mathfrak{a}^*_\mathbb{C}$ and every irreducible unitary representation $\pi$ of $K^\lambda$, the pair $(\lambda, \pi)$ is a concrete datum and one can consider the irreducible admissible representation $\mathcal{H}(\lambda, \pi)$ of $G_0$. Then, from all the foregoing analysis and Theorem \ref{motiongroup}, we obtain the following classification.

\begin{theorem}
    Every topologically completely irreducible representation of $G_0$ is Naimark equivalent to one of the form $\mathcal{H}(\lambda, \pi)$, for some $\lambda \in \mathfrak{a}^*_\mathbb{C}$ and irreducible unitary representation $\pi$ of $K^\lambda$. Two such representations $\mathcal{H}(\lambda, \pi)$ and $\mathcal{H}(\lambda', \pi')$ are Naimark equivalent if and only if $(\lambda, \pi)$ and $(\lambda', \pi')$ are related by an element of the Weyl group $W$.
\end{theorem}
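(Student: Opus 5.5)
This classification will follow by combining Theorem \ref{motiongroup} with the description of closed orbits and stabilizers just established. We transfer everything to the adjoint picture: fix a $K$-invariant inner product on $\mathfrak{p}$, extended bilinearly to $\mathfrak{p}_\mathbb{C}$. It gives a $K_\mathbb{C}$-equivariant isomorphism $\mathfrak{p}_\mathbb{C} \to \mathfrak{p}^*_\mathbb{C}$ sending $\mathfrak{a}_\mathbb{C}$ onto $\mathfrak{a}^*_\mathbb{C}$, the latter identified with forms vanishing on $\mathfrak{q}_\mathbb{C}$. This works because $\mathfrak{q}$ is the orthogonal complement of $\mathfrak{a}$. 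Hence the proposition on closed orbits translates verbatim: the closed $K_\mathbb{C}$-orbits in $\mathfrak{p}^*_\mathbb{C}$ are exactly those meeting $\mathfrak{a}^*_\mathbb{C}$, and two elements of $\mathfrak{a}^*_\mathbb{C}$ are $K_\mathbb{C}$-conjugate if and only if they are $W$-conjugate. Likewise Proposition \ref{complexstab} gives that $K^\lambda$ is a maximal compact subgroup of $K^\lambda_\mathbb{C}$ for $\lambda\in\mathfrak{a}^*_\mathbb{C}$. So every $(\lambda,\pi)$ with $\lambda \in \mathfrak{a}^*_\mathbb{C}$ is a concrete datum.

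\emph{Exhaustion.} Let $E$ be topologically completely irreducible. By Theorem \ref{motiongroup}, $E$ is Naimark equivalent to some $\mathcal{H}(\mu,\sigma)$ with $(\mu,\sigma)$ a concrete datum. Since $K_\mathbb{C}\mu$ is closed, it contains some $\lambda = g\mu$ with $\lambda \in \mathfrak{a}^*_\mathbb{C}$ and $g \in K_\mathbb{C}$. Transport $\tilde\sigma$ to $K^\lambda_\mathbb{C} = gK^\mu_\mathbb{C}g^{-1}$ via conjugation by $g$, giving an irreducible representation $V'$ equivalent in the sense of (\ref{twist}). Restricting $V'$ to the compact form $K^\lambda$ gives an irreducible unitary representation $\pi$ (after choosing an invariant inner product). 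Here irreducibility is preserved because $K^\lambda$ is Zariski dense in the reductive group $K^\lambda_\mathbb{C}$, and $\tilde\pi = V'$ by uniqueness of extension. Thus $(\mu,\sigma)$ and $(\lambda,\pi)$ are equivalent concrete data, and Theorem \ref{motiongroup} gives that $E$ is Naimark equivalent to $\mathcal{H}(\lambda,\pi)$.

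\emph{Equivalences.} By Theorem \ref{motiongroup}, $\mathcal{H}(\lambda,\pi)$ and $\mathcal{H}(\lambda',\pi')$ are Naimark equivalent if and only if there exist $g\in K_\mathbb{C}$ with $\lambda' = g\lambda$ and an intertwiner as in (\ref{twist}). If they are $W$-related, lift $w\in W$ to $k \in N_K(\mathfrak{a})$. Then $\lambda' = k\lambda$ and $\pi' \cong \pi\circ \mathrm{Ad}(k^{-1})$, which is what "related by $W$" should mean for the representation part, and this gives the required equivalence. Conversely, if $\lambda'=g\lambda$ with $g \in K_\mathbb{C}$, the proposition gives $w\in W$, lifted to $k \in N_K(\mathfrak{a})$, with $\lambda' = k\lambda$. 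Then $k^{-1}g \in K^\lambda_\mathbb{C}$, so the twist by $g$ differs from the twist by $k$ by an inner automorphism of $K^\lambda_\mathbb{C}$, which does not change the isomorphism class of $\tilde\pi$. Restricting to $K^{\lambda'}$ then shows $\pi' \cong \pi\circ\mathrm{Ad}(k^{-1})$.

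The main obstacle is the converse direction: reducing an arbitrary conjugating element $g \in K_\mathbb{C}$ to an element of $N_K(\mathfrak{a})$. This depends on the fact that $W$ here may be realized as $N_K(\mathfrak{a})/Z_K(\mathfrak{a})$. One must also make precise how $W$ acts on pairs $(\lambda,\pi)$, since $W$ only acts on $\pi$ up to the choice of lift, and different lifts differ by elements of $Z_K(\mathfrak{a}) \subset K^\lambda$, hence by inner twists. I would verify that this ambiguity is harmless and then invoke the proposition for the orbit statement.
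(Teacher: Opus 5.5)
Your proposal is correct and takes essentially the paper's route. You transfer the closed-orbit proposition and Proposition \ref{complexstab} from $\mathfrak{p}_\mathbb{C}$ to $\mathfrak{p}^*_\mathbb{C}$ via the $K$-invariant inner product, note that every $(\lambda,\pi)$ with $\lambda\in\mathfrak{a}^*_\mathbb{C}$ is a concrete datum, and read off the classification from Theorem \ref{motiongroup}. You also supply details the paper leaves implicit: conjugating an arbitrary concrete datum into $\mathfrak{a}^*_\mathbb{C}$, and making the $W$-action on $\pi$ precise via lifts to $N_K(\mathfrak{a})$, which is well defined because $Z_K(\mathfrak{a})\subset K^\lambda$.
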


\section{Quantum analogues of Cartan motion groups and their irreducible representations}

Real semisimple quantum groups have been introduced by De Commer \cite{DCquantisation} (see also \cite{YGE}*{§1} for a brief overview). They form a generalization of complex semisimple quantum groups (see the monograph \cite{VoigtYuncken}). Just as a Hecke algebra can be defined from the symmetric pair corresponding to a semisimple Lie group, one can associate to each real semisimple quantum group an approximately unital algebra. The non-degenerate modules over this algebra are viewed as analogues of Harish Chandra's $(\mathfrak{g},K)$-modules. In the classical case, the isomorphism classes of simple $(\mathfrak{g},K)$-modules and the isomorphism classes of simple $(\mathfrak{g}_0,K)$-modules, where $(\mathfrak{g}_0,K)$ is the pair of the Cartan motion group, share a natural common parametrization known as the Mackey bijection \cites{Afgoustidis1}. As suggested by a small example \cite{YGE}, a similar correspondence may hold for real semisimple quantum groups, provided that the pair\footnote{Or rather the affine action pair $(K_\mathbb{C}, \mathfrak{p}_\mathbb{C}^*$), see the previous section.} $(\mathfrak{g}_0,K)$ is replaced by an affine action pair of the form $(G^\theta, G /G^\theta)$, where $G$ is a simply connected complex semisimple group and $\theta$ an involution of $G$. Motivated by these considerations, the goal of this section is to give a simple description of the irreducible covariant representations of such an action pair. This could give some insights into the representation theory of real semisimple quantum groups, which is completely unknown in general.

The material of Subsection 3.1 will be used throughout this section. For the rest of the section, we fix a simply connected semisimple complex algebraic group $G$ and an involutive automorphism $\theta$ of $G$. We also consider a compact real form $(U,\tau)$ of $G$ such that $\tau$ commutes with $\theta$ (such a compact real form always exists: apply \cite{Knapp}*{Theorem 6.16} to the Lie algebra of $G$ viewed as a real one). Let $H$ denote the subgroup of elements of $G$ fixed by $\theta$. It is clear that $H$ is Zariski closed and stable by $\tau$, so it is reductive. Since $U$ is simply connected, $K = U\cap H$ is connected \cite{Bourbaki}*{p.48, Théorème 1, c)}, hence so is $H$. We are interested in the representation theory of the affine action pair $(H, G/H)$. Theorem \ref{classification} already gives an abstract classification of the irreducible representations, but our goal here is to make it explicit by giving a simple parametrization of the closed $H$-orbits of $G/H$.

\subsection{Background from the theory of symmetric spaces of compact type} To achieve the above goal, we start by re-expressing some results about the symmetric space $U/K$, for which we refer to \cite{Helgason1}*{VII} and \cite{Helgason2}*{V.4}.

Let $A \subset G$ be a $\tau$-stable torus which is maximal among those  contained in
$$\{ g \in G : \theta(g) = g^{-1}\}.$$ Let us fix a maximal torus $T$ of $G$ containing $A$, which is stable by $\theta$ and $\tau$. Denoting by $F$ the subgroup of order two elements of $A$, note that $$F = A \cap H \subset K.$$ Let $(X^*(T), \Phi, X_*(T), \Phi^\vee)$ be the root datum associated to the pair $(G,T)$. Since $G$ is simply connected, the coweight lattice $X_*(T)$ is $\mathbb{Z}$-spanned by the set of coroots $\Phi^\vee$. The set of restricted roots $\Phi_{|A} \setminus \{0\}$ is denoted by $\Sigma$. We recall that $(\mathbb{R}\otimes_\mathbb{Z} X^*(A), \Sigma)$ is a root system and that its Weyl group $W$ identifies with $N_K(A)/Z_K(A)$, the quotient of the normalizer of $A$ in $K$ by its centralizer in $K$, through the action of $N_K(A)$ on $A$ by conjugation. Let $(\mathbb{R}\otimes_\mathbb{Z}X_*(A), \Sigma^\vee)$ be the dual root system of $\Sigma$. From \cite{Helgason1}*{Theorem 8.5}, it follows that
$X_*(A) = \mathrm{span}_\mathbb{Z} (\Sigma^\vee),$
in particular $(X^*(A), \Sigma, X_*(A), \Sigma^\vee)$ forms a root datum. In the following, we fix positive root systems $\Phi^+$ and $\Sigma^+$ of $\Phi$ and $\Sigma$ respectively such that $(\Phi^+)_{|A} \subset \{0\} \cup \Sigma^+$. We equip $X^*(A)$ with the order defined by
$$\lambda \leq \lambda' \Longleftrightarrow \forall\alpha \in \Sigma^+,\; \langle \lambda, \alpha^\vee\rangle \leq \langle \lambda', \alpha^\vee\rangle,$$
where for any $\alpha \in \Sigma^+$, we denote by $\alpha^\vee$ the associated coroot. The set of dominant weights, denoted by $X^*(A)^+$, corresponds to the set of non-negative weights with respect to this ordering.

A representation $V$ of $G$ is called spherical if its subspace of $H$-fixed vectors $V^H$ is non-zero. In that case, every non-zero element of $V^H$ is called a spherical vector. The characterization of the irreducible spherical representations of $G$ is due to Helgason \cite{Helgason2}*{V, Corollary 4.2}. Let $T_H$ be a maximal $\tau$-stable torus of $H$ which is contained in $T$. An irreducible representation of $G$ with highest weight $\lambda$ is spherical if and only if $\lambda$ is trivial on $T_H$ and $\langle \lambda, \alpha^\vee \rangle \in 2\mathbb{Z}$ for all $\alpha \in \Sigma$. The last condition amounts to $\lambda_{|A} \in 2X^*(A)$, or equivalently, $\lambda$ is trivial on $F$. Since $T\cap H = T_H F$, we get the following statement.

\begin{proposition}[Helgason] \label{characspherical}
    An irreducible representation of $G$ is spherical if and only if its highest weight is trivial on $T \cap H$.
\end{proposition}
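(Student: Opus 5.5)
The plan is to reduce the statement to Helgason's criterion as recalled just above. That criterion says an irreducible representation with highest weight $\lambda$ is spherical if and only if
\begin{itemize}
\item $\lambda$ is trivial on $T_H$, and
\item $\lambda_{|A}\in 2X^*(A)$, i.e.\ $\langle\lambda,\alpha^\vee\rangle\in 2\mathbb{Z}$ for all $\alpha\in\Sigma$.
\end{itemize}
Given the decomposition $T\cap H = T_H F$, it then suffices to show that the second condition is equivalent to $\lambda_{|F}=1$. Indeed, $\lambda$ is trivial on $T\cap H=T_HF$ if and only if it is trivial on both $T_H$ and $F$.

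For the equivalence $\lambda_{|A}\in 2X^*(A) \iff \lambda_{|F}=1$, I use that $A$ is a torus, so $A\cong X_*(A)\otimes_\mathbb{Z} \mathbb{C}^\times$. Its subgroup of order-two elements is therefore $F = \{\mu(-1) : \mu\in X_*(A)\}$. For a character $\chi\in X^*(A)$ we get $\chi(\mu(-1)) = (-1)^{\langle\chi,\mu\rangle}$. Hence $\chi$ is trivial on $F$ exactly when $\langle\chi,\mu\rangle$ is even for all $\mu\in X_*(A)$, i.e.\ $\chi\in 2X^*(A)$, using the perfect pairing between $X^*(A)$ and $X_*(A)$. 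Since $X_*(A)=\mathrm{span}_\mathbb{Z}(\Sigma^\vee)$ (Helgason's Theorem 8.5), this parity condition only needs checking on coroots, which gives exactly $\langle\lambda,\alpha^\vee\rangle\in 2\mathbb{Z}$ for $\alpha\in\Sigma$. Apply this with $\chi=\lambda_{|A}$.

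The remaining ingredient is $T\cap H=T_HF$. The inclusion $\supseteq$ is clear because $F=A\cap H$. For $\subseteq$, $T$ is $\theta$-stable, so it splits up to finite intersection as $T=T^\theta_0\cdot A'$, where $A'$ is the anti-invariant part. Here $T_H$ should be the identity component $(T^\theta)_0$, a maximal torus of $H$ inside $T$, and $A'$ should equal $A$ by maximality of $A$ in the $\theta$-anti-invariant set. Take $t\in T^\theta$ and write $t=sa$ with $s\in T_H$ and $a\in A$. Then $\theta(t)=t$ forces $a^{-1}=\theta(a)=a$, so $a^2=1$ and $a\in F$.

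The main obstacle is this last decomposition. One has to check that $T=T_HA$ (multiplication $T_H\times A\to T$ is surjective since the Lie algebras span $\mathfrak t$ and $T$ is connected), and that $T_H$ really is the full identity component of $T^\theta$. I would prove the latter via the Lie algebra $\mathfrak t^\theta$, using that $\mathfrak t=\mathfrak t^\theta\oplus\mathfrak a$.
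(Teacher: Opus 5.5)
Your proposal is correct and follows the paper's own argument: the paper also derives the statement from Helgason's criterion. It observes that $\langle\lambda,\alpha^\vee\rangle\in2\mathbb{Z}$ for all $\alpha\in\Sigma$ amounts to $\lambda_{|A}\in 2X^*(A)$, equivalently $\lambda_{|F}=1$, and then invokes $T\cap H=T_HF$. The paper asserts both steps without proof; you justify them correctly, the first via $F=\{\mu(-1):\mu\in X_*(A)\}$ and the perfect pairing, the second via $T=T_HA$ with $A$ the identity component of the $\theta$-anti-invariant part of $T$.

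One wording point: $T_H=(T^\theta)_0$ needs no Lie algebra argument once $T_H$ is read as the largest torus of $H$ contained in $T$, since every such torus lies in $T\cap H=T^\theta$. Also, $T_H$ is in general not a maximal torus of $H$ itself.
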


The inclusion $A \subset T$ induces an isomorphism $A/F \to T/H\cap T$, hence every weight $\lambda \in 2X^*(A)$ has a unique extension $\lambda^T \in X^*(T)$ which is trivial on $T\cap H$. Finally, about the spherical vectors of irreducible spherical representations of $G$, the following is known.

\begin{proposition}\label{spherical}
    Let $\lambda \in 2X^*(A)^+$ and let $V$ be an irreducible spherical representation of $G$ with highest weight $\lambda^T$. Let $\Pi(V)$ denote the set of $A$-weights of $V$. The following holds:
    \begin{itemize}
        \item $\lambda$ is the highest $A$-weight of $V$ and the associated eigenspace is one-dimensional,
        \item $V^H$ is one-dimensional,
        \item if $v = \sum_{ \omega \in \Pi(V)}v_\omega$ is the expansion in $A$-weight vectors of a spherical vector $v$ of $V$, then $v_{\lambda} \neq 0$.
    \end{itemize}
\end{proposition}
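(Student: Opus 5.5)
We compare $A$-weights with $T$-weights and use the fact that $\mathrm{Lie}(A)$ and $\mathfrak{h}$ together generate enough of $\mathfrak{g}$ to move any vector toward the highest weight.

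\emph{First bullet.} Let $\mathfrak{t}=\mathrm{Lie}(T)$ and $\mathfrak{a}=\mathrm{Lie}(A)$. Every $T$-weight $\mu$ of $V$ has the form $\lambda^T-\sum_{\beta\in\Phi^+}n_\beta\beta$ with $n_\beta\ge 0$. Since $(\Phi^+)_{|A}\subset\{0\}\cup\Sigma^+$, restricting to $A$ gives $\mu_{|A}\le\lambda^T_{|A}=\lambda$, so $\lambda$ is the highest $A$-weight. The $\lambda$-eigenspace for $A$ is the sum of the $T$-weight spaces $V_\mu$ with $\mu=\lambda^T-\sum n_\beta\beta$, where every $\beta$ with $n_\beta>0$ restricts trivially to $A$. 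Such $\beta$ are roots of the centralizer $Z_G(A)=M A$. The space $V^{\mathfrak n}$, where $\mathfrak{n}$ is the sum of the restricted root spaces for $\Sigma^+$, is an irreducible $Z_G(A)$-module containing the highest weight line. I will argue that the derived group of $M$ has compact real form inside $K$; indeed the roots vanishing on $A$ are compact imaginary for $\theta$. Being spherical for its $\theta$-fixed part, which is essentially all of it, this irreducible module must then be one-dimensional, because the highest weight is trivial on $T\cap H\supset T_H$.

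Alternatively, and probably more cleanly, I will quote \cite{Helgason2}*{V.4} directly, since the paper says it is re-expressing known results. For the full argument I would use the Iwasawa-type decomposition $\mathfrak{g}=\mathfrak{h}\oplus\mathfrak{a}\oplus\mathfrak{n}$ at the Lie algebra level, which follows from the restricted root decomposition with respect to $\theta$.

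\emph{Second and third bullets.} By the decomposition $\mathfrak{g}=\mathfrak{h}+\mathfrak{a}+\mathfrak{n}$, the universal enveloping algebra satisfies $U(\mathfrak g)=U(\mathfrak n)U(\mathfrak a)U(\mathfrak h)$ (PBW). Let $v\in V^H$ be nonzero. Since $V$ is irreducible, $U(\mathfrak g)v=V$, and therefore $V=U(\mathfrak n)U(\mathfrak a)v$. Now $U(\mathfrak a)v$ is spanned by the $A$-weight components $v_\omega$, and $U(\mathfrak n)$ raises $A$-weights. So if $v_\lambda=0$, every vector of $V$ would have zero $\lambda$-component, contradicting $V_\lambda\neq0$. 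This proves the third bullet.

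For the second bullet, consider the map $V^H\to V_\lambda$, $v\mapsto v_\lambda$. It is injective by the third bullet applied to any nonzero element of its kernel, and $\dim V_\lambda=1$ by the first bullet. Hence $\dim V^H\le1$, and $V^H\ne0$ by Proposition \ref{characspherical}.

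\emph{Main obstacle.} The main difficulty is the one-dimensionality of $V_\lambda$ in the first bullet, which requires controlling the $Z_G(A)$-action. My fallback is to dualize. The space $(V^*)^H$ is nonzero because $V^*$ is again spherical, with lowest weight $-\lambda^T$. Pairing with the highest weight space then produces a spherical function on $G/H$ whose restriction to $A$ gives the multiplicity bound.
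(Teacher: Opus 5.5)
\textbf{There is a genuine gap: your key step for the second and third bullets runs in the wrong direction.} From $\mathfrak g=\mathfrak h+\mathfrak a+\mathfrak n$ you correctly get $V=U(\mathfrak n)\,\mathrm{span}\{v_\omega\}$. You then argue that, because $U(\mathfrak n)$ \emph{raises} $A$-weights, $v_\lambda=0$ would leave every vector of $V$ with zero $\lambda$-component. This is backwards. An element of $\mathfrak n\,U(\mathfrak n)$ applied to some $v_\omega$ with $\omega<\lambda$ can perfectly well land in $V_\lambda$, so nothing about $v_\lambda$ follows. Writing $U(\mathfrak n)=\mathbb{C}1\oplus\mathfrak n\,U(\mathfrak n)$, what your argument actually shows concerns the \emph{lowest} $A$-weight $w_0\lambda$: the $w_0\lambda$-component of every vector of $V$ lies in $\mathbb{C}v_{w_0\lambda}$. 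As written, the third bullet is not established, and neither is the injectivity of $V^H\to V_\lambda$ that you use for the second.

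\textbf{The repair is short, in either of two ways.}

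First, you can transport the lowest-weight statement to the top. A representative $n_0\in N_K(A)\subset H$ of the longest element $w_0\in W$ fixes $v$ and maps $V_{w_0\lambda}$ onto $V_\lambda$. Hence $v_\lambda=n_0v_{w_0\lambda}\neq0$.

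Second, and more directly, apply $\theta$ to your decomposition. This gives $\mathfrak g=\mathfrak h\oplus\mathfrak a\oplus\bar{\mathfrak n}$, with $\bar{\mathfrak n}=\theta(\mathfrak n)$ the sum of the negative restricted root spaces, so $V=U(\bar{\mathfrak n})\,\mathrm{span}\{v_\omega\}$. Now $\bar{\mathfrak n}\,U(\bar{\mathfrak n})$ strictly lowers $A$-weights, while $\lambda-\omega\in\mathbb{N}\Sigma^+$ for every weight $\omega$. Therefore the $\lambda$-component of every vector of $V$ lies in $\mathbb{C}v_\lambda$.

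This second version gives $V_\lambda=\mathbb{C}v_\lambda$ outright. That yields at once $v_\lambda\neq0$, $\dim V^H\le 1$, and the multiplicity-one part of the first bullet. So the step you single out as the main obstacle is not needed, and neither is the rather vague dualization fallback. Your $Z_G(A)$ argument for the first bullet is nevertheless sound.

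\textbf{One input should be stated and justified.} You assume without proof that $\mathfrak z_{\mathfrak g}(\mathfrak a)=\mathfrak m\oplus\mathfrak a$ with $\mathfrak m=\mathfrak z_{\mathfrak h}(\mathfrak a)\subset\mathfrak h$. Equivalently, the $(-1)$-eigenspace of $\theta$ in $\mathfrak z_{\mathfrak g}(\mathfrak a)$ is exactly $\mathfrak a$. You use this both for the decomposition $\mathfrak g=\mathfrak h+\mathfrak a+\mathfrak n$ and for your ``compact imaginary'' claim. It follows from the maximality of $A$.

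For the record, the paper gives no argument of its own for this proposition; it simply refers to Kr\"otz--Kuit--Schlichtkrull.
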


For the proof, we refer to the beginning of \cite{KKS}*{§4.1}.

\subsection{A restriction theorem and a parametrization of the closed orbits}

Our first step towards the parametrization of closed $H$-orbits of $G/H$ is to prove an integrated version of the Chevalley restriction theorem. Let us equip $G$ with the algebraic action of $H \times H$ by left and right multiplication. The natural action of $W$ on $A$ preserves $F$, hence $A/F$ inherits an action of $W$.

\begin{theorem} \label{restriction}
    The inclusion $A \subset G$ induces an isomorphism of affine varieties $$W \backslash A /F \to (H \times H) \dquo G.$$
\end{theorem}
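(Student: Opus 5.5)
We have to show that restriction of functions along $A \hookrightarrow G$ induces an algebra isomorphism
$$r : \mathbb{C}[G]^{H\times H} \longrightarrow \mathbb{C}[A]^{F \times W} = \mathbb{C}[A/F]^W.$$
Here $F$ acts on $A$ by translation, and this action is the restriction of the $(H\times H)$-action, since $F = A\cap H$ and $A$ is abelian. Both sides are finitely generated because $H\times H$ and $F\rtimes W$ are reductive, and $W\backslash A/F$ is an affine (geometric) quotient by a finite group.

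\textbf{Well-definedness.} Let $f \in \mathbb{C}[G]^{H\times H}$. Its restriction $f_{|A}$ is $F$-invariant, since $f(af) = f(a)$ for $f \in F \subset H$. It is also $W$-invariant: for $n \in N_K(A) \subset H$ we have $f(nan^{-1}) = f(a)$, and $W = N_K(A)/Z_K(A)$. So $r$ is a well-defined algebra morphism.

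\textbf{Injectivity.} I would use the decomposition $G = HAH$ (more precisely, $HAH$ is dense in $G$). For the compact form, $U = KA_UK$ with $A_U = A\cap U$ is the classical result of \cite{Helgason1}*{VII}. Since $A_U$ is Zariski dense in $A$ and $U$ is Zariski dense in $G$, the image of the morphism $H\times A\times H \to G$ is dense. Alternatively, a dimension count at the identity gives $\mathfrak{g} = \mathfrak{h}+\mathfrak{a}+\mathfrak{h}$ up to $\mathrm{Ad}$, since $\mathfrak{g}=\mathfrak{h}\oplus\mathfrak{p}$ and $\mathfrak{p} = \mathrm{Ad}(H)\mathfrak{a}$ generically. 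Either way, an $H$-bi-invariant function vanishing on $A$ vanishes on a dense set, hence $r$ is injective.

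\textbf{Surjectivity.} This is the main obstacle, and I would handle it with spherical representations. By algebraic Peter--Weyl, $\mathbb{C}[G] = \bigoplus_V V^*\otimes V$, so $\mathbb{C}[G]^{H\times H} = \bigoplus_V (V^*)^H \otimes V^H$. By Proposition~\ref{characspherical} and Proposition~\ref{spherical}, this algebra has a basis of zonal functions
$$\phi_\lambda(g) = \langle v^*_\lambda, g v_\lambda\rangle, \qquad \lambda \in 2X^*(A)^+,$$
where $V$ has highest weight $\lambda^T$ and $V^*$ is also spherical.

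Now expand the spherical vector $v_\lambda = \sum_\omega v_\omega$ in $A$-weights, and similarly for $v_\lambda^*$. Restricting to $A$ gives
$$\phi_{\lambda|A} = \sum_{\omega\in\Pi(V)} c_\omega\, e^{\omega}, \qquad c_\omega = \langle v^*_{\lambda,-\omega}, v_\omega\rangle.$$
The weights satisfy $\omega \le \lambda$. The leading coefficient $c_\lambda$ is nonzero: the $\lambda$-component of $v$ is nonzero by Proposition~\ref{spherical}, the pairing between the one-dimensional extreme weight spaces of $V$ and $V^*$ is perfect, and the corresponding component of $v^*$ is nonzero as well. All weights $\omega$ lie in $2X^*(A)$ because $\phi_{\lambda|A}$ is $F$-invariant.

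On the other side, $\mathbb{C}[A/F] = \mathbb{C}[2X^*(A)]$, and its $W$-invariants have the basis of orbit sums $m_\mu = \sum_{\omega\in W\mu} e^{\omega}$ for $\mu \in 2X^*(A)^+$. Here we use that every $W$-orbit in $2X^*(A)$ meets the dominant chamber exactly once. Since $\phi_{\lambda|A}$ is $W$-invariant, we get
$$\phi_{\lambda|A} = c_\lambda m_\lambda + \sum_{\mu<\lambda} (\ast)\, m_\mu,$$
a unitriangular expansion up to the nonzero scalar $c_\lambda$. The set $\{\mu \in 2X^*(A)^+ : \mu \le \lambda\}$ is finite, so induction on the partial order shows every $m_\mu$ lies in the image of $r$. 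This gives surjectivity.

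The delicate points to check are the nonvanishing of $c_\lambda$ and the finiteness of the dominant weights below $\lambda$, which is standard for root data.

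Finally, since $r$ is an isomorphism of finitely generated algebras, it corresponds to the isomorphism of affine varieties $W\backslash A/F \to (H\times H)\dquo G$ induced by the inclusion $A \subset G$.
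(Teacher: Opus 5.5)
Your proposal is correct and follows the paper's proof essentially step for step. Well-definedness comes from $F\subset H$ and $N_K(A)\subset H$, and injectivity from the Zariski density of $HAH$ obtained from $U\subset KAK$. Surjectivity is proved by induction: you expand the restriction to $A$ of a spherical matrix coefficient $g\mapsto\langle\phi,gv\rangle$ in $W$-orbit sums indexed by $2X^*(A)^+$, whose leading coefficient $\langle\phi_{-\lambda},v_\lambda\rangle$ is nonzero by Proposition~\ref{spherical}.

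One point to state explicitly is which order you use in the triangularity step. It should be the dominance order ($\lambda-\omega\in\mathbb{N}\Sigma^+$). That order puts every $A$-weight of $V$ below $\lambda$ and gives the finiteness you invoke. The coroot-by-coroot comparison of $\langle\cdot,\alpha^\vee\rangle$ written in the paper does not have this property. For example, take $\mathrm{SL}_3\times\mathrm{SL}_3$ with the flip and $\lambda=4\varpi_1$. Then $2\varpi_2$ occurs in $S$, but $\langle 2\varpi_2,\alpha_2^\vee\rangle>\langle 4\varpi_1,\alpha_2^\vee\rangle$.
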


\begin{proof}
    Our goal is to prove that the restriction to $A$ induces an isomorphism $\mathbb{C}[G]^{H\times H} \to \mathbb{C}[A]^{W\times F}$.
    Since $F$ is contained in $H$ and $W$ acts on $A$ by conjugation by elements of $K \subset H$, the latter morphism is well-defined. It is injective: by \cite{Helgason1}*{Theorem 6.7}, we have $U \subset KAK$, so $HAH$ is Zariski dense in $G$. It remains to prove the surjectivity.
    
    If a weight $\lambda$ of $A$ is considered as a regular function on $A$, rather than as an algebraic group morphism $A \to \mathbb{C}^*$, let us denote it by $e^\lambda$. In that way, $\lambda \mapsto e^\lambda$ defines a group morphism $X^*(A) \to \mathbb{C}[A]^\times$. We recall that $(e^\lambda)_{\lambda \in X^*(A)}$ is a basis of $\mathbb{C}[A]$. The algebra of $F$-invariant regular functions on $A$ is the image of the comorphism of $a \in A \mapsto a^2 \in A$. The latter maps $e^\lambda$ to $e^{2\lambda}$ for any $\lambda \in X^*(A)$, so $(e^\lambda)_{\lambda \in 2X^*(A)}$ is a basis of $\mathbb{C}[A]^F$.
    Since every $W$-orbit of $X^*(A)$ intersects $X^*(A)^+$, we deduce from the above that the elements $$\langle e^\lambda \rangle_W =|W|^{-1} \sum_{w \in W} e^{w(\lambda)},$$ as $\lambda$ ranges over $2X^*(A)^+$, span $\mathbb{C}[A]^{W\times F}$. Therefore, to conclude the proof, it is enough to show that for every $\lambda \in 2X^*(A)^+$, there exists a $H$-bi-invariant regular function on $G$ which restricts to $\langle e^\lambda \rangle_W$ on $A$. We prove this by induction.
    
    Let $\lambda \in 2X^*(A)^+$. Let us assume that for every $\omega \in 2X^*(A)^+$ such that $\omega\! <\! \lambda$, we can lift $\langle e^\omega\rangle_{W}$ into a $H$-bi-invariant regular function on $G$. Let $V$ be an irreducible representation of $G$ with highest weight $\lambda^T$ (see below Proposition \ref{characspherical}). Both $V$ and its contragredient representation $V^*$ are spherical. Let $\phi \in V^*$ and $v \in V$ be spherical vectors. We write $v = \sum_\omega v_\omega$ and $\phi = \sum_\omega \phi_\omega$ for their expansion in $A$-weight vectors. Let $c_{\phi,v}$ be the regular $H$-bi-invariant regular function on $G$ defined by
    $$c_{\phi,v}(g) = \langle \phi, gv\rangle \qquad (g \in G).$$
    Then we have $(c_{\phi,v})_{|A} = \sum_\omega \langle \phi_{-\omega}, v_\omega\rangle e^\omega$, where the sum ranges over the $A$-weights of $V$. Since $(c_{\phi,v})_{|A}$ is invariant by $F$ and $W$, we have $\langle \phi_{-\omega}, v_\omega\rangle = 0$ whenever $w \notin 2X^*(A)$ and $\langle \phi_{-\omega}, v_\omega\rangle = \langle \phi_{-\omega'}, v_{\omega'}\rangle$ whenever $\omega' \in W \omega$. Thus, there exist a set $S$ of $A$-weights of $V$, which contains $\lambda$ and is included in $2X^*(A)^+$, such that $(c_{\phi,v})_{|A} = \sum_{\sigma\in S} \langle \phi_{-\sigma}, v_\sigma\rangle \langle e^\sigma \rangle_W$. Since $\sigma \leq \lambda$ for every $\sigma \in S$, the induction hypothesis implies that $\sum_{\sigma \neq \lambda} \langle \phi_{-\sigma}, v_\sigma\rangle \langle e^\sigma \rangle_W$ can be lifted into a $H\times H$-invariant regular function on $G$. By Proposition \ref{spherical}, we have $\langle \phi_{-\lambda},v_\lambda\rangle \neq 0$, hence $\langle e^\lambda\rangle_W$ can also be lifted.
\end{proof}

Let $\pi$ denote the quotient morphism $G \to G/H$. Through the latter, we identify $\pi(A) \subset G/H$ with $A/F$. The closed orbits of $G/H$ under the action of $H$ are parametrized as follows.

\begin{proposition}
Every element of $A/F$ generates a closed $H$-orbit of $G/H$. Conversely, every closed $H$-orbit of $G/H$ intersects $A/F$. If $a$ and $a'$ are two elements of $A/F$, then $a$ belongs the $H$-orbit of $a'$ if and only if $a \in W a'$.
\end{proposition}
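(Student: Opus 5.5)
The plan is to reduce everything to Theorem \ref{restriction}, using the general facts about reductive quotients recalled in Section \ref{quotients}. The affine categorical quotient $H\dquo(G/H)$ has coordinate ring $\mathbb{C}[G/H]^H=\mathbb{C}[G]^{H\times H}$, so it identifies with $(H\times H)\dquo G$. By Theorem \ref{restriction}, the composite $A/F \to G/H \to H\dquo(G/H)$ is the quotient map $A/F\to W\backslash A/F$ followed by an isomorphism. Each fiber of $G/H\to H\dquo(G/H)$ contains a unique closed $H$-orbit, and this morphism is surjective. Hence two points of $A/F$ lie over the same point of the quotient if and only if they are $W$-conjugate, because $W$ is finite and so $W\backslash A/F$ separates $W$-orbits.

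The first and main step is to show that every $a\in A/F$ generates a closed $H$-orbit. Once this is known, the rest follows.
\begin{itemize}
\item Given a closed orbit $Y$, let $p$ be its image in the quotient. By surjectivity of $A/F\to W\backslash A/F$, there is $a\in A/F$ mapping to $p$. Then $Ha$ is closed and lies in the same fiber as $Y$, so $Ha=Y$ by uniqueness of the closed orbit in each fiber. This proves the second statement.
\item For the third statement, if $a\in Wa'$ then $a\in Ha'$, since $W$ is realized by $N_K(A)\subset H$ acting by conjugation, and conjugation by $n\in H$ sends $aH$ to $naH=nan^{-1}H$. Conversely, if $a\in Ha'$, then $a$ and $a'$ have the same image in the quotient, hence $a\in Wa'$.
\end{itemize}

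For closedness, I would use the standard criterion for reductive group actions on affine varieties: an orbit $Hy$ is closed if and only if the stabilizer-type data or a Kostant--Rallis style argument applies. The most concrete route is the Cartan involution trick. Map $G/H$ $H$-equivariantly into $G$ by $gH\mapsto g\theta(g)^{-1}$, which is a closed embedding onto a component of $\{g:\theta(g)=g^{-1}\}$ when $G$ is simply connected (Richardson). Under this map the $H$-action becomes conjugation, and $aF\mapsto a^2$. The element $a^2\in A$ is a semisimple element of $G$, and conjugacy classes of semisimple elements in $G$ are closed. The $H$-orbit of $a^2$ is then closed in the $G$-conjugacy class intersected with the $\theta$-twisted set, by Richardson's result that for $s$ semisimple in $\{\theta(g)=g^{-1}\}$, $H\cdot s$ is closed; this is the group analogue of \cite{KostantRallis}*{Theorem 1}.

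If I wish to avoid citing Richardson, I would instead apply the Hilbert--Mumford/Matsushima style criterion: $H\cdot a$ is closed if $H^a$ is reductive and the orbit meets the $\tau$-stable compact picture. More precisely, $a=ue^{iX}$-type decompositions give a Kempf--Ness argument. The function $g\mapsto \|\mathrm{Ad}\|$-norm on $G$ relative to $U$ restricted to $H\cdot a^2$ attains its minimum at $A$-points with $|{\cdot}|$ real part. One then shows that $a^2$ is a critical point of the moment map for the $U\cap H=K$ action, which holds because $A$ is $\tau$-stable and $\theta$-anti-fixed. By Kempf--Ness, the orbit is then closed.

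I expect this closedness step to be the main obstacle. Everything else is formal bookkeeping from Theorem \ref{restriction} and the unique-closed-orbit property.
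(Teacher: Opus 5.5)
Your proof is correct, but it runs in the opposite direction from the paper's.

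The paper first shows that every closed $H$-orbit meets $A/F$. It uses Matsuki--Oshima on $\varphi(G)$, where $\varphi(g)=g\theta(g)^{-1}$: a closed orbit contains a semisimple element, semisimple elements of $\varphi(G)$ are $H$-conjugate into $Z_{\varphi(G)}(A)$, and $Z_{\varphi(G)}(A)=A$. Closedness of the orbit through $\pi(a)$ is then deduced formally. The unique closed $H\times H$-orbit $O$ in the fibre through $a$ projects to a closed orbit of $G/H$, hence meets $A$. Injectivity of $W\backslash A/F\to(H\times H)\dquo G$, together with $F\subset H$ and $W=N_K(A)/Z_K(A)$, then forces $a\in O$.

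You instead prove closedness directly and get the intersection statement for free, from the \emph{surjectivity} in Theorem~\ref{restriction} and uniqueness of the closed orbit in each fibre. Your route thus uses the full isomorphism of Theorem~\ref{restriction}, but only the easy half of the semisimple/closed dictionary. It never needs the conjugacy of semisimple elements into $A$. That easy half even has a short self-contained proof:
\begin{itemize}
\item the class $C=G\cdot a^2$ is closed and stable under $\sigma(x)=\theta(x)^{-1}$;
\item $C^\sigma$ is smooth, with $T_xC^\sigma=(\mathfrak g/\mathfrak g^x)^\theta=\mathfrak h/\mathfrak h^x=T_x(H\cdot x)$;
\item hence every $H$-orbit in $C^\sigma$ is open, and therefore closed;
\item pulling back along the injective $H$-equivariant map $gH\mapsto g\theta(g)^{-1}$ gives a closed orbit in $G/H$.
\end{itemize}
The paper's route needs only injectivity of the restriction map at this point, but imports the harder Matsuki--Oshima structure theory.

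Your fallback Kempf--Ness paragraph is not right as written. Reductivity of $H^a$ does not imply that $H\cdot a$ is closed: Matsushima's criterion gives only the converse (think of $\mathbb{C}^*$ scaling $\mathbb{C}$). Also, Kempf--Ness needs a \emph{zero} of the moment map, not a critical point of it. The sketch can be repaired as follows. Realize $G\subset\mathrm{GL}_n(\mathbb{C})$ with $U\subset\mathrm{U}(n)$. Since $A$ is $\tau$-stable, $a^2$ is a normal matrix. So the derivative at $t=0$ of $\|e^{t\xi}a^2e^{-t\xi}\|^2$ equals $2\,\mathrm{Re}\,\mathrm{tr}(\xi[a^2,(a^2)^*])=0$ for every $\xi\in\mathfrak h$. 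This is exactly the Kempf--Ness condition for the conjugation action of $H$ on $\mathrm{M}_n(\mathbb{C})$. The input is the $\tau$-stability of $A$, not its $\theta$-anti-invariance.
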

\begin{proof}
    
    First, let us check that every closed orbit of $G/H$ intersects $A/F$. This will follow from work of Matsuki--Oshima \cite{MatsukiOshima}. Let $\varphi: G \to G$ be the map defined by
    $$\varphi(g) = g \theta(g)^{-1} \qquad (g \in G).$$
    Matsuki and Oshima proved that $\varphi(G)$ is a closed embedded submanifold of $G$ and that it induces a diffeomorphism $\tilde \varphi : G/H \to \varphi(G)$. If we equip $\varphi(G)$ with the action of $H$ by conjugation, then $\tilde \varphi$ is $H$-equivariant. From \cite{MatsukiOshima}*{Proposition 2, (ii)}, it follows that every analytically closed orbit of $\varphi(G)$ contains a semi-simple element. By \cite{MatsukiOshima}*{Corollary (ii) p.412}, every semi-simple element in $\varphi(G)$ is $H$-conjugated to an element of the centralizer $Z_{\varphi(G)}(A)$ of $A$ in $\varphi(G)$. Now, by \cite{MatsukiOshima}*{Lemma 8}, we have $A = Z_{\varphi(G)}(A)$. To sum up, if $O$ is a closed $H$-orbit of $G/H$, then $\tilde \varphi(O)$ is an analytically closed orbit of $\varphi(G)$, hence $A \cap \tilde \varphi(O) \neq \emptyset$. Since $A = \varphi(A) = \tilde\varphi(A/F)$, it follows that $O$ intersects $A/F$.
    
    Now let us prove that any element of $A/F = \pi(A)$ generates a closed orbit. Let us fix $a \in A$ and let us write $S_a$ for the fiber of the morphism $G \to (H\times H) \dquo G$ which contains $a$. As explained in Subsection \ref{quotients}, there exists a unique closed $H\times H$-orbit $O$ in $S_a$. Since $\pi$ is open and $O$ is right $H$-saturated, $\pi(O)$ is a closed $H$-orbit of $G/H$. By the above, we thus have $\pi(O)\cap \pi(A) \neq \emptyset$, so that $O \cap A \neq \emptyset$. Now, any element of $O\cap A$ has the same image as $a$ in $(H\times H) \dquo G$, hence in $W\backslash A /F$ by Theorem \ref{restriction}. This proves that $W(Fa)$ intersects $O$. Since $F \subset H$ and $W$ acts on $A$ by conjugation by elements of $H$, we have $a \in O$, which proves that $\pi(a)$ generates the closed orbit $\pi(O)$.
    
    Two elements $a$ and $a'$ of $A/F$ are in same $H$-orbit if and only if their preimages by $\pi$ are all in the same (closed) $H\times H$-orbit. Since every fiber of the morphism $G \to (H\times H)\dquo G$ contains a unique closed orbit, by Theorem \ref{restriction} we get that $a$ and $a'$ are in the same $H$-orbit if and only if they belong to the same $W$-orbit.
\end{proof}

In conclusion, from Theorem \ref{classification}, we get the following description of the irreducible covariant representation of $(H, G/H)$.
\begin{theorem}
    Any irreducible covariant representation of $(H, G/H)$ is isomorphic to $I_{a}(V)$ for some $a \in A/F$ and irreducible representation $V$ of $H^{a}$. Two such irreducible representations $I_{a}(V)$ and $I_{a'}(V')$ are isomorphic if and only if $(a,V)$ and $(a',V')$ are related by an element of $W$.
\end{theorem}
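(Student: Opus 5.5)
This is a direct specialization of Theorem \ref{classification} to the affine action pair $(H, G/H)$. The plan is to combine it with the preceding proposition on closed $H$-orbits.

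First, the hypotheses. $G/H$ is affine because $H$ is reductive (Matsushima), so $(H, G/H)$ is an affine action pair over $\mathbb{C}$ and Theorem \ref{classification} applies. That theorem says every irreducible covariant representation is isomorphic to some $I_x(V)$, where $x \in G/H$ generates a closed $H$-orbit and $V$ is an irreducible representation of $H^x$. By the preceding proposition, every closed $H$-orbit meets $A/F$. So we may choose $g \in H$ with $a = gx \in A/F$, and transport $V$ along conjugation by $g$ to a representation $V_g$ of $H^{a} = gH^xg^{-1}$. Then $(x,V)$ and $(a,V_g)$ are equivalent induction data, and $I_x(V) \cong I_a(V_g)$. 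Conversely, every $a \in A/F$ generates a closed orbit, again by the proposition, so every pair $(a,V)$ is an induction datum and $I_a(V)$ is irreducible. This gives the first statement.

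For the isomorphism criterion, Theorem \ref{classification} says $I_a(V)\cong I_{a'}(V')$ if and only if there is $h \in H$ with $a' = ha$ and a linear isomorphism $T : V \to V'$ satisfying \eqref{twist}. We must show this is the same as being related by an element of $W$. We interpret ``related by $w \in W$'' as follows: pick $n \in N_K(A)$ representing $w$; then $a' = w a$ and $V'$ is isomorphic to the twist of $V$ by $\mathrm{Ad}(n)$. The choice of representative $n$ does not matter up to equivalence, since two representatives differ by an element of $Z_K(A)$, which fixes $a$ and hence lies in $H^a$, and twisting by an element of $H^a$ gives an isomorphic representation via $T = $ its action.

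One direction is immediate: $W$ acts through $N_K(A) \subset H$, so $W$-related data are equivalent. For the other direction, the proposition shows that if $a' = ha$ then $a' = wa$ for some $w \in W$. Choose a representative $n$; then $n^{-1}h \in H^{a}$. Hence twisting by $h$ agrees with twisting by $n$, up to an inner twist by an element of $H^a$, which is harmless as noted above.

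The only point needing care is this last bookkeeping: checking that the $W$-action on data is well defined and matches equivalence of induction data. Beyond that, the proof is a direct combination of earlier results.
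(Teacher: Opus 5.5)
Your proposal is correct and follows the paper's approach: the paper deduces this theorem directly from Theorem~\ref{classification} combined with the preceding proposition parametrizing the closed $H$-orbits of $G/H$ by $W$-orbits in $A/F$. Your extra bookkeeping is sound and makes explicit what the paper leaves implicit. This covers the independence of the representative $n\in N_K(A)$, using $Z_K(A)\subset H^a$. It also covers reducing an arbitrary $h$ with $ha=a'$ to such an $n$, via $n^{-1}h\in H^a$.
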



\begin{thebibliography}{99}
    \bib{Afgoustidis1}{article}{
        author = {Afgoustidis, A.},
        title = {On the analogy between real reductive groups and Cartan motion groups: The Mackey-Higson bijection},
        journal = {Cambridge Journal of Mathematics},
        volume = {9, no.3},
        date = {2021},
        pages = {551--575}
    }
    \bib{Baldoni}{article}{
        title = {General representation theory of real reductive Lie groups},
        author = {Baldoni, M. W.},
        pages = {61--72},
        book = {
            title = {Representation theory and automorphic forms (Edinburgh, 1996)},
            series = {Proc. Sympos. Pure Math.},
            volume = {61},
            publisher = {American Mathematical Society},
            address = {Providence, RI},
            date = {1997}
        }
    }
    \bib{BBHM}{article}{
        title = {Extensions of Representations of Algebraic Linear Groups},
        author = {Bialynicki-Birula, A.},
        author = {Hochschild, G.},
        author = {Mostow, G. D.},
        journal = {American Journal of Mathematics},
        volume = {85, no. 1},
        date = {1963},
        pages = {131-144}
        }
    \bib{Bourbaki}{book}{
        title = {Groupes et algèbres de Lie},
        subtitle = {Chapitre 9 Groupes de Lie réels compacts},
        author = {Bourbaki, N.},
        publisher = {Springer Berlin, Heidelberg},
        date = {2007}
        }
        
    \bib{Borel}{book}{
        title = {Linear Algebraic Groups},
        author = {Borel, A.},
        publisher = {Springer New York, NY},
        date = {1991}
        }
    \bib{3authors}{article}{
        title = {Induced Modules and Affine Quotients},
        author = {Cline, E.},
        author = {Parshall, B.},
        author = {Scott, L.},
        journal = {Mathematische Annalen},
        volume = {230},
        date = {1977},
        pages = {1-14}
        }
    \bib{DelormeCh}{article}{
        title = {Sur les représentations des groupes de déplacements de Cartan},
        author = {Champetier, C.},
        author = {Delorme, P.},
        journal = {Journal of Functional Analysis},
        volume = {43},
        date = {1981},
        pages = {258-279}
        }
    \bib{DCquantisation}{article}{
        title = {Quantisation of semisimple real Lie groups},
        author = {De Commer, K.},
        journal = {Archivum Mathematicum},
        volume = {060, no.5},
        date = {2024},
        pages = {285--310}
    }
    
    \bib{Ducros}{article}{
        title = {Introduction à la théorie des schémas},
        author = {Ducros, A.},
        note = {arXiv:1401.0959},
        }
    
    \bib{YGE}{article}{
        title = {The smallest quantum Mackey analogy},
        author = {Gaudillot--Estrada, Y.},
        note = {arXiv:2602.23222}
    }
    \bib{Helgason2}{book}{
        title = {Groups and Geometric Analysis: Integral Geometry, Invariant Differential Operators, and Spherical Functions},
        author = {Helgason, S.},
        series = {Mathematical Surveys and Monographs},
        volume = {83},
        publisher = {American Mathematical Society},
        date = {2000}}
    \bib{Helgason1}{book}{
        title = {Differential Geometry, Lie Groups, and Symmetric Spaces},
        author = {Helgason, S.},
        publisher = {American Mathematical Society},
        series = {Graduate Studies in Mathematics},
        volume = {34},
        date = {2001}}
    \bib{MAhigson}{article}{
        title = {The Mackey analogy and $K$-theory},
        author = {Higson, N.},
        pages = {149--172},
        book = {
            title = {Group representations, ergodic theory, and mathematical physics: a tribute to George W. Mackey},
            series = {Contemporary Mathematics},
            volume = {449},
            publisher = {American Mathematical Society},
            address = {Providence, RI},
            date = {2008}
        }
    }
    \bib{Knapp}{book}{
        title = {Lie Groups Beyond an Introduction},
        author = {Knapp, A.},
        publisher = {Birkhäuser Boston, MA},
        date = {1996}}
        
    \bib{KostantRallis}{article}{
        title = {Orbits and Representations Associated with Symmetric Spaces},
        author = {Kostant, B.},
        author = {Rallis, S.},
        journal = {American Journal of Mathematics},
        volume = {93, no. 3},
        pages = {pp. 753-809},
        date = {1971}
        }
    \bib{KKS}{article}{
        title = {On Harish-Chandra's Plancherel theorem for Riemannian symmetric spaces},
        author = {Krötz, B.},
        author = {Kuit, J. J.},
        author = {Schlichtkrull, H},
        journal = {arXiv:2409.08113}}
    \bib{Mackey}{article}{
        author = {Mackey, G. W.},
        title = {Imprimitivity for Representations of Locally Compact Groups I},
        journal = {Proceedings of the National Academy of Sciences of the United States of America},
        volume = {35, no.9},
        pages = {537-545},
        date = {1949}}
    \bib{MackeyAnalogy}{article}{
        title = {On the analogy between semisimple Lie groups and certain related semidirect product groups},
        author = {Mackey, G. W.},
        pages = {339--363},
        book = {
            title = {Lie groups and their representations (Proc. Summer School, Bolyai János Math. Soc., Budapest, 1971)},
            publisher = {Halsted},
            address = {New York},
            date = {1975}
        }
    }
    \bib{MatsukiOshima}{article}{
        title = {Orbits on affine symmetric spaces under the action of the isotropy subgroups},
        author = {Matsuki, T.},
        author = {Oshima, T.},
        journal = {Journal of the Mathematical Society of Japan},
        volume = {32, no. 2},
        date = {1980},
        pages = {399-414}}
    \bib{MonkVoigt}{article}{
        title = {Complex quantum groups and a deformation of the Baum–Connes assembly map},
        author = {Monk, A.},
        author = {Voigt, C.},
        journal = {Transactions of the American Mathematical Society},
        volume = {371},
        date = {2019},
        pages = {8849--8877}
    }
    \bib{Newstead}{book}{
        title = {Introduction to Moduli Problems and Orbit Spaces},
        author = {Newstead, P. E.},
        publisher = {Narosa},
        series = {Tata Institute of Fundamental Research},
        date = {2012}
        }
    \bib{2authors}{article}{
        title = {An imprimitivity theorem for algebraic groups},
        author = {Parshall, B.},
        author = {Scott, L.},
        journal = {Indagationes Mathematicae (Proceedings)},
        volume = {83, no. 1},
        pages = {39-47},
        date = {1980}}
    \bib{OV}{book}{
        title = {Lie Groups and Algebraic Groups},
        author = {Onishchik, A. L.},
        author = {Vinberg, E. B.},
        publisher = {Springer Berlin, Heidelberg},
        series = {Springer Series in Soviet Mathematics},
        date = {1990}}
    \bib{Rader}{article}{
        title = {Spherical Functions on Cartan Motion Groups},
        author = {Rader, C.},
        journal = {Transactions of the American Mathematical Society},
        volume = {310, no. 1},
        date = {1988},
        pages = {1-45}}
    
    \bib{Raeburn}{article}{
        title = {Induced C*-algebras and a symmetric imprimitivity theorem},
        author = {Raeburn, I.},
        journal = {Mathematische Annalen},
        volume = {280},
        date = {1988},
        pages = {369-387}}
    \bib{VoigtYuncken}{book}{
        title = {Complex Semisimple Quantum Groups and Representation Theory},
        author = {Voigt, C.},
        author = {Yuncken, R.},
        series = {Lecture Notes in Mathematics},
        volume = {2264},
        publisher = {Springer Cham},
        date = {2020}
    }    
    \bib{Wallach}{book}{
        title = {Real reductive groups I},
        author = {Wallach, N.},
        date = {1988},
        publisher = {Academic Press}
    }
    \bib{Warner}{book}{
        title = {Harmonic Analysis on Semi-Simple Lie Groups I},
        author = {Warner, G.},
        publisher = {Springer Berlin, Heidelberg},
        date = {1972}
        }
    \bib{Dana}{book}{
        title = {Crossed products of $C^*$-algebras},
        author = {Williams, D. P.},
        series = {Mathematical Surveys and Monographs},
        volume = {134},
        publisher = {American Mathematical Society},
        address = {Providence, RI},
        date = {2007}
    }
\end{thebibliography}
\end{document}